\documentclass[11pt,reqno]{article}
\usepackage{latexsym, amsmath, amssymb, amsthm, a4, epsfig}
\usepackage{graphicx}
\usepackage{ascmac}
\usepackage{color}

\newtheorem{theorem}{Theorem}[section]

\newtheorem{lemma}[theorem]{Lemma}


\setlength{\textwidth}{150mm} \setlength{\textheight}{230mm}
\setlength{\oddsidemargin}{6mm} \setlength{\evensidemargin}{6mm} \setlength{\topmargin}{-12mm}


\newcommand{\p}{\partial}

\newcommand{\eqnref}[1]{(\ref {#1})}

\newcommand{\Nbb}{\mathbb{N}}

\newcommand{\Rbb}{\mathbb{R}}

\newcommand{\la}{\langle}
\newcommand{\ra}{\rangle}

\newcommand{\Hcal}{\mathcal{H}}

\newcommand{\Kcal}{\mathcal{K}}

\newcommand{\Scal}{\mathcal{S}}



\newcommand{\Ga}{\alpha}
\newcommand{\Gb}{\beta}
\newcommand{\Gd}{\delta}
\newcommand{\Ge}{\epsilon}

\newcommand{\Gvf}{\varphi}

\newcommand{\Gk}{\kappa}

\newcommand{\Gl}{\lambda}

\newcommand{\Gs}{\sigma}

\newcommand{\GG}{\Gamma}

\newcommand{\GO}{\Omega}


\newcommand{\beq}{\begin{equation}}
\newcommand{\eeq}{\end{equation}}

\def\ol{\overline}


\numberwithin{equation}{section}
\numberwithin{figure}{section}

\begin{document}

\title{Surface localization of plasmons in three dimensions and convexity\thanks{\footnotesize This work was supported by NRF (of S. Korea) grants No. 2019R1A2B5B01069967 and by JSPS (of Japan) KAKENHI Grant Numbers JP19K14553, 19H01799.}}

\author{Kazunori Ando\thanks{Department of Electrical and Electronic Engineering and Computer Science, Ehime University, Ehime 790-8577, Japan. Email: {\tt ando@cs.ehime-u.ac.jp}.}
\and Hyeonbae Kang\thanks{Department of Mathematics and Institute of Applied Mathematics, Inha University, Incheon 22212, S. Korea. Email: {\tt hbkang@inha.ac.kr}.}
\and Yoshihisa Miyanishi\thanks{Center for Mathematical Modeling and Data Science, Osaka University, Osaka 560-8531, Japan. Email: {\tt miyanishi@sigmath.es.osaka-u.ac.jp, nakazawa@sigmath.es.osaka-u.ac.jp}.}
\and Takashi Nakazawa\footnotemark[4]}
\date{}
\maketitle

\begin{abstract}
The Neumann--Poincar\'e operator defined on a smooth surface has a sequence of eigenvalues converging to zero, and the single layer potentials of the corresponding eigenfunctions, called plasmons, decay to zero, i.e.,  are localized on the surface, as the index of the sequence $j$ tends to infinity. We investigate quantitatively the surface localization of the plasmons in three dimensions. The results are threefold. We first prove that on smooth bounded domains of general shape the sequence of plasmons converges to zero off the boundary surface almost surely at the rate of $j^{-1/2}$. We then prove that if the domain is strictly convex, then the convergence rate becomes $j^{-\infty}$, namely, it is faster than $j^{-N}$ for any integer $N$. As a consequence, we prove that cloaking by anomalous localized resonance does not occur on three-dimensional strictly convex smooth domains. We then look into the surface localization of the plasmons on the Clifford torus by numerical computations. The Clifford torus is taken as an example of non-convex surfaces. The computational results show that the torus exhibits the spectral property completely different from strictly convex domains. In particular, they suggest that there is a subsequence of plasmons on the torus which has much slower decay than other entries of the sequence.
\end{abstract}

\noindent{\footnotesize {\bf AMS subject classifications}. 35J47 (primary), 35P05 (secondary)}

\noindent{\footnotesize {\bf Key words}. Neumann--Poincar\'e operator, spectrum, plasmon, surface localization, convexity, cloaking by anomalous localized resonance.}


\section{Introduction}

The Neumann--Poincar\'e (abbreviated by NP afterwards) operator defined on the boundary surface of a bounded domain is a self-adjoint compact operator on a proper Hilbert space provided that the surface is smooth enough. Thus it has a sequence of eigenvalues which converges to zero. The single layer potentials of the corresponding eigenfunctions are called plasmons. In the case of spheres, the plasmons are given by spherical harmonics, and one can easily see that plasmons, which are defined in the whole space, decay exponentially fast outside the sphere, the boundary of the domain. For this reason, the plasmon is usually said to be localized on the surface.

It is the purpose of this paper to investigate in a quantitative manner the surface localization of the plasmons on domains of general shape. We are particularly interested in how geometry, specifically the convexity, determines the localization property of the plasmons. It is helpful to mention here that the NP operator in three dimensions exhibits quite different spectral properties on convex and non-convex domains. For example, NP operators on convex domains have at most finitely many negative eigenvalues, while they have infinitely many negative eigenvalues on non-convex domains (see recent papers \cite{AJKKM, JK, MR}).

We now present the results of the paper in a precise manner. Let $\GO$ be a bounded domain in $\Rbb^3$. The NP operator on $\p\GO$ is well-defined if $\p\GO$ is Lipschitz continuous. However, for the purpose of this paper's investigation, we assume that $\p\GO$ is $C^{1,\Ga}$ for some $\Ga>0$. Let
\beq
\GG(x-y)=\frac{1}{4\pi|x-y|},
\eeq
the fundamental solution for the Laplacian in three dimensions. Then single layer potential operator $\Scal_{\p\GO}$ is defined as
\beq\label{Single layer}
\Scal_{\p\GO}[\psi](x):=\int_{\p\GO}\GG(x-y) \psi(y)\; dS_y, \quad x \in \Rbb^3,
\eeq
where $dS$ denotes the surface element on $\p\GO$. The potential function $\psi$ belongs to $H^{-1/2}(\p\GO)$, the $L^2$-Sobolev space of order $-1/2$ on the boundary surface $\p\GO$. We may regard $\Scal_{\p\GO}$ as a bounded operator from $H^{-1/2}(\p\GO)$ into $H^1(\Rbb^3)$, or from $H^{-1/2}(\p\GO)$ into $H^{1/2}(\p\GO)$. It is known that $\Scal_{\p\GO}: H^{-1/2}(\p\GO) \to H^{1/2}(\p\GO)$ is invertible \cite{Verch-JFA-84}. Furthermore, if we introduce a bilinear form on $H^{-1/2}(\p\GO)$ by
\beq\label{inner}
\la \psi, \Gvf \ra_{\Hcal^*} := \la \psi, \Scal_{\p\GO}[\Gvf] \ra,
\eeq
where $\la \cdot, \cdot \ra$ denotes the $H^{-1/2}-H^{1/2}$ pairing, then $\la \cdot, \cdot \ra_{\Hcal^*}$ is actually an inner product on $H^{-1/2}(\p\GO)$ and induces the norm equivalent to the usual Sobolev norm \cite{KKLSY-JLMS-16, KPS-ARMA-07}. Throughout this paper $\Hcal^*$ denotes the space $H^{-1/2}(\p\GO)$ equipped with the inner product \eqnref{inner}, and the norm is denoted by $\| \cdot \|_{\Hcal^*}$.

We now introduce the NP operator. Let for $\Gvf \in H^{1/2}(\p\GO)$
\beq\label{Double layer}
\Kcal_{\p\GO}[\Gvf](x):=\int_{\p\GO} \p_{\nu_y} \GG(x-y) \Gvf(y)\; dS_y.
\eeq
Here $\p_{\nu_y}$ denotes the outward normal derivative with respect to $y$-variable.  The operator $\Kcal_{\p\GO}$ is related to the single layer potential through the following jump relation:
\beq\label{singlejump}
\p_\nu \Scal_{\p \GO} [\Gvf] \Big|_\pm (x) = \biggl( \pm \frac{1}{2} I + \Kcal_{\p \GO}^* \biggr) [\Gvf] (x),
\quad x \in \p \GO\;,
\eeq
where the subscripts $\pm$ indicate the limit from outside and inside $\GO$, respectively, and the operator $\Kcal_{\p \GO}^*$ is the adjoint of $\Kcal_{\p\GO}$ on $L^2(\p\GO)$ (see, for example, \cite{AmKa07Book2, Fo95}). The operator $\Kcal_{\p\GO}$ (or $\Kcal_{\p \GO}^*$) is called the NP operator on $\p\GO$.

It is known that $\Kcal_{\p \GO}^*$ is self-adjoint with respect to the inner product \eqnref{inner}. In fact, it is a consequence of Plemelj's symmetrization principle (also known as Calder\'on's identity)
\beq\label{Plemelj}
\Scal_{\p \GO} \Kcal^*_{\p \GO} = \Kcal_{\p \GO} \Scal_{\p \GO}.
\eeq
Moreover, if $\p\GO$ is $C^{1,\Ga}$ (as we assume in this paper), $\Kcal_{\p \GO}^*$ is compact on $\Hcal^*$. So it has eigenvalues of finite multiplicities accumulating to $0$. We enumerate them as $\Gl_j$ counting multiplicities in the descending order in absolute value. Here and throughout this paper, we let $\Gvf_j$ ($j=0,1,2, \ldots$) be the corresponding eigenfunctions of $\Kcal_{\p \GO}^*$ normalized so that $\| \Gvf_j \|_{\Hcal^*}=1$ ($\Gvf_0$ is the eigenfunction corresponding to the simple eigenvalue $1/2$). The plasmons are therefore $\Scal_{\p\GO}[\Gvf_j]$, the single layer potentials of eigenfunctions. We emphasize that the plasmons defined in the whole space $\Rbb^3$.

If $\GO$ is the ball of radius $r_0$ (centered at $0$), then the NP eigenvalues are $\frac{1}{2(2n+1)}$ and their multiplicities are $2n+1$. Moreover, the corresponding eigenfunctions are $c_{n,m} Y_n^m$ where $Y_n^m$ are spherical harmonics and $c_{n,m}$ are constants chosen for normalization. So, in this case the plasmon is given by
$$
\Scal_{\p \GO}[c_{n,m} Y_n^m](x) =
\begin{cases}
d_{n,m} (r_0/r)^{n+1} Y_n^m(\hat{x}) \quad&\mbox{if } r=|x| \ge r_0,  \\
d_{n,m} (r/r_0)^{n+1} Y_n^m(\hat{x}) \quad&\mbox{if } r=|x| \le r_0,
\end{cases}
$$
where $\hat{x}=x/|x|$ and $d_{n,m}$ are constants which we do not specify. One can see from this explicit formula that in the case of balls the plasmons are localized on the boundary. In fact, one can see that plasmons decay exponentially fast as $n \to \infty$ if $x$ is not on $\p\GO$.
In this paper we quantitatively investigate the surface localization or equivalently the decay off the boundary surface of the plasmon when the surface $\p\GO$ is of arbitrary shape.

To present the first main result of this paper, some preparations are necessary. Given $\Ge >0$, let $\GO_\Ge$ stand for the $\Ge$-tubular neighborhood of $\p\GO$, namely,
\beq
\GO_\Ge :=\{\ x\in \Rbb^3 \ | \ \mbox{dist} (x, \p\GO) < \Ge \ \}.
\eeq
The support of a sequence of functions $\{u_j\}$ in $H^{1}(\Rbb^3)$ is defined as
$$
({\rm supp} \{u_j\})^c:= \{ x \in \Rbb^3\ |\ \mbox{$\exists$ a neighborfood $B_x$ of}\ x\ \mbox{\rm such that}\ \lim_{j\rightarrow 0} \Vert u_j \Vert_{H^{1}(B_x)}=0 \ \}.
$$
For a sequence $\{ a_j\}$ of numbers and a non-negative number $s$, we say $a_j=o(j^{-s})$ as $j\rightarrow \infty$ {\it almost surely} if
\beq\label{as}
\lim_{\Gd \downarrow 0}\limsup_{N\rightarrow \infty} \frac{\sharp \{ j<N\ : \ |a_j|>\Gd j^{-s} \}}{N}=0,
\eeq
or equivalently, if there is a subsequence $\{j_k\}$ such that $a_{j_k}=o(j_k^{-s})$ as $k \to \infty$ and
$\lim_{k \to \infty} {j_k}/{k} =1$.

The first result of this paper is the following theorem for domains of arbitrary shape.

\begin{theorem}\label{thm:general}
Suppose that $\GO$ is a bounded domain in $\Rbb^3$ with the $C^{1,\Ga}$-smooth boundary for some $\Ga >0$. Then,
\beq\label{supp}
\emptyset \neq \mbox{\rm supp} \{\Scal_{\p\GO}[\Gvf_j]\} \subset \p\GO.
\eeq
Furthermore, for any $\Ge >0$,
\beq\label{decayrate1}
\Scal_{\p\GO}[\Gvf_j]=o(1) \quad \mbox{in }\ C^{\infty}_{loc}(\GO_{\Ge}^c),
\eeq
and
\beq\label{decayrate2}
\Scal_{\p\GO}[\Gvf_j]=o(j^{-1/2}) \quad \mbox{almost surely in } \ C^{\infty}_{loc}(\GO_{\Ge}^c),
\eeq
as $j \to \infty$.
\end{theorem}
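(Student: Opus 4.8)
\emph{Proof strategy.} The plan is to reduce all three assertions to soft Hilbert-space properties of the orthonormal system $\{\Gvf_j\}$ in $\Hcal^*$ — weak convergence and Bessel's inequality — together with interior elliptic estimates for the functions $u_j:=\Scal_{\p\GO}[\Gvf_j]$, which are harmonic off $\p\GO$. Since $\Kcal_{\p\GO}^*$ is self-adjoint on $\Hcal^*$, the $\Gvf_j$ may be taken orthonormal, so $\Gvf_j\rightharpoonup0$ weakly in $\Hcal^*$ (equivalently in $H^{-1/2}(\p\GO)$) and $\sum_j|\langle\Gvf_j,h\rangle_{\Hcal^*}|^2\le\|h\|_{\Hcal^*}^2$ for every $h\in\Hcal^*$. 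Also, the eigenvalue $\tfrac12$ being simple, $\Scal_{\p\GO}[\Gvf_0]\equiv c$ is a nonzero constant on $\p\GO$ — its interior Neumann trace $(-\tfrac12 I+\Kcal_{\p\GO}^*)[\Gvf_0]$ vanishes by \eqnref{singlejump}, so $\Scal_{\p\GO}[\Gvf_0]$ is harmonic in $\GO$ with zero Neumann data, hence constant, and it is nonzero because $\Scal_{\p\GO}$ is injective — so for $j\ge1$ the relation $0=\langle\Gvf_j,\Gvf_0\rangle_{\Hcal^*}=\langle\Gvf_j,\Scal_{\p\GO}[\Gvf_0]\rangle=c\int_{\p\GO}\Gvf_j\,dS_y$ gives $\int_{\p\GO}\Gvf_j\,dS_y=0$, and therefore $u_j=O(|x|^{-2})$ as $|x|\to\infty$ and $u_j\in H^1(\Rbb^3)$. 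The key identity, valid for $x\notin\p\GO$ because $\GG(x-\cdot)\in C^\infty(\p\GO)$ and $\Scal_{\p\GO}\colon H^{-1/2}(\p\GO)\to H^{1/2}(\p\GO)$ is invertible, is
\[
u_j(x)=\langle\Gvf_j,\GG(x-\cdot)\rangle=\langle\Gvf_j,\Gy_x\rangle_{\Hcal^*},\qquad\Gy_x:=\Scal_{\p\GO}^{-1}[\GG(x-\cdot)],
\]
where $x\mapsto\Gy_x$ is continuous from $\GO_{\Ge}^c$ into $\Hcal^*$.

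To prove \eqnref{decayrate1} and the inclusion in \eqnref{supp}, note that weak convergence gives $u_j(x)=\langle\Gvf_j,\Gy_x\rangle_{\Hcal^*}\to0$ for each $x\in\GO_{\Ge}^c$, while $|u_j(x)|\le\|\Gy_x\|_{\Hcal^*}$ is bounded uniformly on compact subsets of $\GO_{\Ge}^c$; since each $u_j$ is harmonic near any compact $K\subset\GO_{\Ge}^c$, the interior estimate $\|u_j\|_{C^k(K)}\lesssim\|u_j\|_{L^2(K')}$ (for a slightly larger compact $K'\subset\GO_{\Ge}^c$) makes $\{u_j\}$ bounded in $C^k_{loc}(\GO_{\Ge}^c)$ for every $k$, and Arzel\`{a}--Ascoli together with the pointwise limit $0$ forces $u_j\to0$ in $C^\infty_{loc}(\GO_{\Ge}^c)$; in particular $\mbox{\rm supp}\{u_j\}\subset\p\GO$. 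For $\mbox{\rm supp}\{u_j\}\ne\emptyset$, fix a bounded neighborhood $\mathcal{N}$ of $\p\GO$: by the trace theorem, the equivalence of $\|\cdot\|_{\Hcal^*}$ with $\|\cdot\|_{H^{-1/2}}$, and the invertibility of $\Scal_{\p\GO}$, $\|u_j\|_{H^1(\mathcal{N})}\gtrsim\|u_j\|_{H^{1/2}(\p\GO)}\gtrsim\|\Gvf_j\|_{\Hcal^*}=1$, so covering $\overline{\mathcal{N}}$ by finitely many balls shows $\overline{\mathcal{N}}$ cannot be disjoint from $\mbox{\rm supp}\{u_j\}$.

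For the almost-sure rate \eqnref{decayrate2}, Bessel's inequality and the representation yield, for every $x\in\GO_{\Ge}^c$,
\[
\sum_{j\ge1}|u_j(x)|^2=\sum_{j\ge1}\bigl|\langle\Gvf_j,\Gy_x\rangle_{\Hcal^*}\bigr|^2\le\|\Gy_x\|_{\Hcal^*}^2,
\]
and $x\mapsto\|\Gy_x\|_{\Hcal^*}^2$ is continuous, hence bounded on any compact $K'\subset\GO_{\Ge}^c$; integrating and applying Tonelli gives $\sum_{j\ge1}\|u_j\|_{L^2(K')}^2<\infty$, which the interior estimate $\|u_j\|_{C^k(K)}\lesssim\|u_j\|_{L^2(K')}$ ($K\Subset K'$) upgrades to $\sum_{j\ge1}\|u_j\|_{C^k(K)}^2<\infty$ for every compact $K\subset\GO_{\Ge}^c$ and every $k$. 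Finally, $\sum_jb_j^2<\infty$ implies $b_j=o(j^{-1/2})$ almost surely: for each $\Gd>0$ the set $S_\Gd=\{j:b_j>\Gd j^{-1/2}\}=\{j:jb_j^2>\Gd^2\}$ satisfies $\sum_{j\in S_\Gd}j^{-1}\le\Gd^{-2}\sum_jb_j^2<\infty$, and a set of positive upper density has non-summable reciprocals, so $S_\Gd$ has natural density $0$ — which is exactly the condition \eqnref{as}. Taking $b_j=\|u_j\|_{C^k(K)}$ with $K$ ranging over an exhaustion of $\GO_{\Ge}^c$ and all $k$ yields \eqnref{decayrate2}.

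The step I expect to cost the most work is turning this Hilbert-space input into statements in $C^\infty_{loc}$: making the interior elliptic estimates uniform in $j$ on complements of tubular neighborhoods and, above all, justifying carefully the representation $u_j(x)=\langle\Gvf_j,\Scal_{\p\GO}^{-1}[\GG(x-\cdot)]\rangle_{\Hcal^*}$ and the continuity of $x\mapsto\Gy_x$ — this is precisely where the invertibility and mapping properties of $\Scal_{\p\GO}$, hence the hypothesis $\p\GO\in C^{1,\Ga}$, are really used. The remaining ingredients — orthonormality and Bessel, weak convergence, and the deterministic density lemma behind \eqnref{as} — are routine. Note that convexity is nowhere used; this is consistent with the rate $j^{-1/2}$ being improved, to $j^{-\infty}$, only under the strict convexity hypothesis of the next theorem.
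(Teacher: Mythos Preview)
Your proof is correct and follows essentially the same route as the paper: represent $u_j(x)$ as an $\Hcal^*$-inner product against a fixed element, apply Bessel's inequality to get $\sum_j|u_j(x)|^2<\infty$ locally uniformly off $\p\GO$, upgrade to $C^\infty_{loc}$ via interior estimates for harmonic functions, and invoke the density lemma. The only notable differences are that the paper obtains the Bessel bound by citing the expansion $\GG(x-z)=-\sum_j u_j(z)u_j(x)+u_0(z)$ from \cite{AK} rather than writing $u_j(x)=\la\Gvf_j,\Gy_x\ra_{\Hcal^*}$ directly, and proves the density lemma (its Lemma~2.1) by an explicit contradiction-and-blocking argument, whereas your proof via $\sum_{j\in S_\Gd}j^{-1}\le\Gd^{-2}\sum_j b_j^2<\infty$ is shorter.
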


The statement \eqnref{decayrate1} means that for any compact subset $K$ of $\GO_\Ge^c$ and for any multi-index $\Gb$
\beq
\sup_{x\in K} |\p^\Gb \Scal_{\p\GO}[\Gvf_j](x)|=o(1) \quad\mbox{as } j \to \infty,
\eeq
and \eqnref{decayrate2} means likewise.

Theorem \ref{thm:general} is a geometry-independent result which holds for general domains. The following theorem shows that the decay rate of the surface localized plasmons is dramatically enhanced on strictly convex domains.

\begin{theorem}\label{thm:convex}
Let $\GO$ be a strictly convex bounded domain in $\Rbb^3$ with the $C^\infty$-smooth boundary. As $j \to \infty$,
\beq\label{decay-convex}
\Scal_{\p\GO}[\Gvf_j] =o(j^{-\infty}) \quad \mbox{\rm in}\ C_{loc}^{\infty}(\GO_{\Ge}^c),
\eeq
namely, $\Scal_{\p\GO}[\Gvf_j] =o(j^{-s})$ in $C_{loc}^{\infty}(\GO_{\Ge}^c)$ for all $s>0$.
\end{theorem}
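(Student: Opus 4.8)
The plan is to show that strict convexity forces the eigenfunctions $\Gvf_j$ to become increasingly oscillatory --- concentrated at spatial frequency of order $\Gl_j^{-1}\sim j^{1/2}$ --- and that this oscillation makes the harmonic single layer extension $\Scal_{\p\GO}[\Gvf_j]$ decay off $\p\GO$ faster than any power of $j$. This is exactly the mechanism already visible for the ball, where $\Gvf_j$ is (up to normalization) a spherical harmonic of degree $n\sim j^{1/2}$ and the corresponding plasmon behaves like $(r_0/r)^{n+1}\sim e^{-cj^{1/2}}$ outside. To make this quantitative for an arbitrary strictly convex domain I would combine the pseudodifferential structure of the NP operator with a duality pairing against the smooth (off-surface) kernel $\GG(x-\cdot)$.

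First I would recall that, on a $C^\infty$-smooth closed surface, $\Kcal^*_{\p\GO}$ is a classical pseudodifferential operator of order $-1$, whose principal symbol at a point with principal curvatures $\Gk_1,\Gk_2$ equals, up to a fixed nonzero constant, $(\Gk_1\Gx_2^2+\Gk_2\Gx_1^2)/|\Gx|^3$ in coordinates aligned with the principal directions; this is the computation underlying the Weyl law and the convex/non-convex dichotomy for NP eigenvalues in \cite{MR, AJKKM, JK}, and it in particular gives $\Gl_j=O(j^{-1/2})$. When $\GO$ is strictly convex we have $\Gk_1,\Gk_2>0$ at every point, so $\Gk_1\Gx_2^2+\Gk_2\Gx_1^2\ge\min_{\p\GO}\min(\Gk_1,\Gk_2)\,|\Gx|^2$ and the symbol is bounded below by $c/|\Gx|$ on the cosphere bundle; thus $\Kcal^*_{\p\GO}$ is \emph{elliptic} of order $-1$, with strictly positive principal symbol (consistently with the finiteness of negative NP eigenvalues on convex domains \cite{AJKKM, JK, MR}). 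It therefore has only finitely many non-positive eigenvalues, spanning a finite-dimensional space $V$ of smooth functions, and $\Lcal:=(\Kcal^*_{\p\GO}|_{V^\perp})^{-1}$ is, modulo a finite-rank smoothing correction, an elliptic pseudodifferential operator of order $+1$ on $\p\GO$; in particular $\Lcal^k$ maps $H^{-1/2}(\p\GO)$ boundedly into $H^{-1/2-k}(\p\GO)$ for every integer $k\ge0$.

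I would then apply this to the eigenfunctions. For $j$ large, $\Gvf_j\in V^\perp$ and $\Lcal\Gvf_j=\Gl_j^{-1}\Gvf_j$, so $\Gvf_j=\Gl_j^{\,k}\Lcal^{k}\Gvf_j$ for every $k$. Since $\|\Gvf_j\|_{H^{-1/2}(\p\GO)}$ is comparable to $\|\Gvf_j\|_{\Hcal^*}=1$, boundedness of $\Lcal^k$ gives
\beq
\|\Gvf_j\|_{H^{-1/2-k}(\p\GO)}\ \le\ C_k\,\Gl_j^{\,k}\ \le\ C_k'\,j^{-k/2},\qquad k=0,1,2,\dots ,
\eeq
so $\Gvf_j$ is small in every negative Sobolev norm, at an arbitrarily high polynomial rate. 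Finally, for a compact $K\subset\GO_{\Ge}^c$ and a multi-index $\Gb$,
\beq
\p^\Gb_x\Scal_{\p\GO}[\Gvf_j](x)=\int_{\p\GO}\p^\Gb_x\GG(x-y)\,\Gvf_j(y)\,dS_y=\big\langle\Gvf_j,\ \p^\Gb_x\GG(x-\cdot)\big\rangle ,
\eeq
and since $\mbox{dist}(x,\p\GO)\ge\Ge$ for $x\in K$, the function $y\mapsto\p^\Gb_x\GG(x-y)$ is $C^\infty$ on $\p\GO$ with $H^{1/2+k}(\p\GO)$-norm bounded by $C_{k,\Gb,\Ge}$ uniformly in $x\in K$. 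The $H^{-1/2-k}$--$H^{1/2+k}$ duality then yields
\beq
\sup_{x\in K}\big|\p^\Gb_x\Scal_{\p\GO}[\Gvf_j](x)\big|\ \le\ C_{k,\Gb,K}\,\Gl_j^{\,k}\ \le\ C_{k,\Gb,K}'\,j^{-k/2},
\eeq
and since $k$ is arbitrary this is $o(j^{-s})$ for every $s>0$, which is \eqnref{decay-convex}.

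The hard part will be the geometric input of the second step: establishing that $\Kcal^*_{\p\GO}$ is a classical pseudodifferential operator of order $-1$ --- which requires a careful expansion of the off-diagonal kernel $\p_\nu\GG(x-y)$ near the diagonal, and is precisely where the $C^\infty$ hypothesis enters --- and, above all, recognizing that strict convexity is exactly the condition that makes the principal symbol elliptic: $(\Gk_1\Gx_2^2+\Gk_2\Gx_1^2)/|\Gx|^3$ is bounded away from zero on the cosphere bundle if and only if $\min(\Gk_1,\Gk_2)>0$ everywhere, i.e. if and only if $\p\GO$ is strictly convex. Once this is granted (it is available in \cite{MR} and the references therein), the remaining steps are routine pseudodifferential calculus and duality; the only bookkeeping nuisance --- the finite-dimensional exceptional space $V$ and the finite-rank correction to $\Lcal$ --- is harmless, as it affects only finitely many indices $j$.
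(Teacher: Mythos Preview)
Your proposal is correct and follows essentially the same route as the paper: both use that on a strictly convex $C^\infty$ surface $\Kcal^*_{\p\GO}$ is an elliptic $\Psi DO$ of order $-1$ with positive principal symbol (the geometric input from \cite{MR}), deduce $\|\Gvf_j\|_{H^{-1/2-k}(\p\GO)}\le C_k\Gl_j^{\,k}$ for every $k$, pair against the smooth off-surface kernel $\GG(x-\cdot)$, and invoke the Weyl asymptotic $\Gl_j\sim C_{\p\GO}j^{-1/2}$ from \cite{Miyanishi:Weyl}. The only cosmetic difference is that the paper adds a finite-rank smoothing operator to make the NP operator globally positive definite and then uses the resulting two-sided norm equivalence $\|\Kcal^*[\Gvf]\|_{H^{s+1/2}}\simeq\|\Gvf\|_{H^{s-1/2}}$, whereas you phrase the same estimate via the inverse $\Lcal$ on $V^\perp$; the two formulations are equivalent.
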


This result has an important implication on plasmonic resonance. Such a fast decay of the plasmon off the boundary surface and the convergence of eigenvalues at the rate of $j^{-1/2}$, which was proved in \cite{Miyanishi:Weyl}, imply that cloaking by anomalous localized resonance (abbreviated by CALR) does not occur on strictly convex $C^\infty$-smooth bounded domains in $\Rbb^3$. A more precise statement of the result together with a proof and some historical accounts is provided in subsection \ref{subsec:CALR}.

The characteristic feature of the strictly convex domains in proving Theorem \ref{thm:convex} is that the NP operators have at most finitely many negative eigenvalues. But the NP operators on non-convex domains have infinitely many negative eigenvalues. These facts were proved in \cite{MR} (see also \cite{AJKKM} for the case of the Clifford torus). Thus, the method of the proof for Theorem \ref{thm:convex} does not apply to non-convex domains, and it is not known how fast the plasmon decays off the boundary surface except the slow convergence of the rate $j^{-1/2}$ given in Theorem \ref{thm:general}. Thus it is quite interesting to investigate if the plasmons on non-convex domains decay as fast as those on the convex domains. It is particularly interesting in relation to possibility of the cloaking by the anomalous localized resonance in three dimensions. Because of such interest and lack of tools for investigation, we investigate surface localization of plasmon on non-convex domains by numerical computations. We take the Clifford torus as a typical example of a non-convex domain, and compute plasmon $\Scal_{\p\GO}[\Gvf_j]$ for $j$ up to 450. The computational result shows, to our surprise, that there are four exceptional eigenvalues among 450 eigenvalues at which the plasmon has much lager values compared to other values. It suggests that there might be a subsequence $\Scal_{\p\GO}[\Gvf_{j_k}]$ which has slow decay. One intriguing feature of eigenfunctions corresponding to the four exceptional eigenvalues is that they are invariant under rotation with respect to the axis of symmetry. We also compute plasmons on an ellipsoid for the purpose of comparing with the case of the torus. The computational result exhibits clear contrast: there are no exceptional eigenvalues on the ellipsoid as Theorem \ref{thm:convex} suggests.

This paper is organized as follows. In section \ref{sec: Proof thm general}, we prove Theorem \ref{thm:general}.
Section \ref{sec: Proof thm convexl} is to prove Theorem \ref{thm:convex} and non-occurrence of CALR on a strictly convex $C^\infty$-smooth bounded domain. The results of numerical computations are presented
in section \ref{sec: Numerical non-convex}. This paper ends with a short discussion.

\section{General domains-Proof of Theorem \ref{thm:general}}\label{sec: Proof thm general}

In this section we prove Theorem \ref{thm:general}. To do so, we first prove the following lemma. We mention that arguments for proving the lemma are well known, for example, in quantum ergodicity for the Laplace eigenfunctions \cite{Zelditch}.

\begin{lemma}\label{almost surely}
If a numerical sequence $\{a_j\}$ satisfies $\sum_{j=1}^{\infty} |a_j|^2 <+\infty$, then
$a_j=o(j^{-1/2})$ as $j\rightarrow \infty$ almost surely.
\end{lemma}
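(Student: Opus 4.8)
The plan is to verify the density characterization \eqnref{as} directly for $s=1/2$; in fact I will prove the stronger statement that for \emph{every} fixed $\Gd>0$
\beq
\limsup_{N\rightarrow\infty}\ \frac{\sharp\{\, j<N \ :\ |a_j|>\Gd j^{-1/2}\,\}}{N}\ =\ 0,
\eeq
so that the outer limit $\Gd\downarrow 0$ in \eqnref{as} becomes trivial. Put $b_j:=|a_j|^2$, so that $\sum_{j\ge 1}b_j<\infty$ by hypothesis, and note that $|a_j|>\Gd j^{-1/2}$ is equivalent to $j\,b_j>\Gd^2$.

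First I fix $\Gd>0$ together with an auxiliary cutoff $M\in\Nbb$, and split the index set $\{\,j<N:\,j\,b_j>\Gd^2\,\}$ into the part with $j<M$ and the part with $M\le j<N$. The first part has at most $M$ elements. For the second part, each such index satisfies $b_j>\Gd^2/j\ge \Gd^2/N$, so summing this inequality over all such $j$ and bounding the resulting partial sum of the $b_j$ by the tail $\sum_{j\ge M}b_j$ gives
\beq
\sharp\{\, M\le j<N \ :\ j\,b_j>\Gd^2\,\}\ \le\ \frac{N}{\Gd^2}\sum_{j\ge M}b_j .
\eeq
Therefore $\sharp\{\,j<N:\,j\,b_j>\Gd^2\,\}\le M+\Gd^{-2}\,N\sum_{j\ge M}b_j$; dividing by $N$ and letting $N\rightarrow\infty$ with $M$ held fixed yields
\beq
\limsup_{N\rightarrow\infty}\ \frac{\sharp\{\, j<N \ :\ |a_j|>\Gd j^{-1/2}\,\}}{N}\ \le\ \frac{1}{\Gd^2}\sum_{j\ge M}b_j .
\eeq

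Finally, the left-hand side above does not depend on $M$, while the right-hand side tends to $0$ as $M\rightarrow\infty$ because the series $\sum_j b_j$ converges; hence the left-hand side equals $0$, which is the claimed stronger statement, and the lemma follows. I expect no genuine obstacle here: this is the standard ``summability forbids a positive density of large coefficients'' argument familiar from quantum ergodicity, and the only point that requires a little care is the order of the limits — one must send $N\rightarrow\infty$ first, with the cutoff $M$ fixed, and only afterwards let $M\rightarrow\infty$ (and $\Gd\downarrow 0$ plays no role at all in this clean version).
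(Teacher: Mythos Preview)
Your argument is correct. It is, however, genuinely different from the paper's proof. The paper argues by contradiction: assuming the density fails to vanish, it extracts $\Gd,\Gd'>0$ and a sequence $N_1<N_2<\cdots$ such that each block $[N_k,N_{k+1})$ contains at least $\tfrac12\Gd' N_{k+1}$ ``bad'' indices; it then lower-bounds $\sum_{j=N_k+1}^{N_{k+1}}|a_j|^2$ by a harmonic-series tail of fixed logarithmic size, and sums over $k$ to contradict $\sum|a_j|^2<\infty$. Your route is a direct Chebyshev-type estimate: you bound the number of bad indices in $[M,N)$ by $N\Gd^{-2}\sum_{j\ge M}b_j$, absorb the first $M$ indices trivially, and then send $N\to\infty$ before $M\to\infty$. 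Your version is shorter, avoids the block construction and the harmonic-sum comparison, and yields the slightly stronger conclusion that the inner $\limsup$ in \eqnref{as} already vanishes for every fixed $\Gd>0$; the paper's contradiction argument, on the other hand, makes the ``infinite mass'' obstruction more visually explicit.
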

\begin{proof}
Assume it is not true that $a_j=o(j^{-1/2})$ as $j\rightarrow \infty$ almost surely. In view of the definition \eqnref{as} of the almost sure convergence, we see that
$$
\lim_{\Gd\downarrow 0}\limsup_{N\rightarrow \infty} \frac{\sharp \{ j<N\ : \ |a_j|>\Gd j^{-1/2}\}}{N}> \Gd'
$$
for some $\Gd'>0$. Then there exists $\Gd>0$ such that
\beq\label{limsup}
\limsup_{N\rightarrow \infty} \frac{\sharp \{ j<N\ : \ |a_j|>\Gd j^{-1/2}\}}{N}>\Gd'.
\eeq
Thus there is an integer $N_1$ such that
$$
\sharp \{ j<N_1 \ : \ |a_j|>\Gd j^{-1/2}\}>\frac{1}{2} \Gd' N_1.
$$
Let $\{  j_1, j_2, \ldots, j_{k_1} \}$ be the enumeration of the set $\{ j<N_1 \ : \ |a_j|>\Gd j^{-1/2}\}$ in the ascending order. Then $k_1 > \Gd' N_1/2$ and
\begin{align*}
S_{N_1} &:=\sum_{j=1}^{N_1} |a_j|^2 \geq \sum_{k=1}^{k_1} \Gd^2 j_k^{-1} \geq \sum_{j=[(1-\frac{\Gd'}{2})N_1]-1}^{N_1} \Gd^2 j^{-1} \\
&\geq C\Gd^2 \left\{\log(N_1+1)-\log \left((1-\frac{\Gd'}{2}) N_1\right)\right\} \ge C' \Gd^2 |\log(1-\frac{\Gd'}{2})|
\end{align*}
for some constant $C$ and $C'$.

Thanks to \eqnref{limsup}, we see that there is an integer $N_2>N_1$ such that
$$
\sharp \{ N_1 \le j<N_2 \ : \ |a_j|>\Gd j^{-1/2}\}>\frac{1}{2} \Gd' N_2.
$$
Thus, through the same reasoning, we arrive at
\begin{align*}
S_{N_1, N_2}&:=\sum_{j=N_1+1}^{N_2} |a_j|^2 \geq \sum_{j=[(1-\frac{\Gd'}{2})N_2]-1}^{N_2} \Gd^2 j^{-1} \\
&\geq C\Gd^2 \left\{\log(N_2+1)-\log \left((1-\frac{\Gd'}{2}) N_2 \right)\right\} \ge C' \Gd^2 |\log(1-\frac{\Gd'}{2})|.
\end{align*}

One can inductively find the sequence $N_1< N_2< N_3 < \ldots$ such that
\begin{align*}
S_{N_{k}, N_{k+1}}&:=\sum_{j=N_k+1}^{N_{k+1}} |a_j|^2 \geq C' \Gd^2 |\log(1-\frac{\Gd'}{2})|
\end{align*}
for all $k$. We thus have
$$
\sum_{j=1}^{\infty} |a_j|^2 >S_{N_1}+S_{N_1, N_2}+S_{N_2, N_3}+ \cdots = \infty,
$$
which is the desired contradiction. Thus $a_j=o(j^{-1/2})$ as $j\rightarrow \infty$ almost surely.
\end{proof}

\noindent{\sl Proof of Theorem \ref{thm:general}}.
We first note that if we define
\beq\label{inner2}
\la f, g \ra_{\Hcal} := \la \Scal_{\p\GO}^{-1}[f], \Scal_{\p\GO}^{-1}[g] \ra_{\Hcal^*}
\eeq
for $f,g \in H^{1/2}(\p\GO)$, then $\la \cdot, \cdot \ra_{\Hcal}$ is an inner product on $H^{1/2}(\p\GO)$. Let $\Hcal=\Hcal(\p\GO)$ be the space $H^{1/2}(\p\GO)$ equipped with the inner product $\la \cdot, \cdot \ra_{\Hcal}$, which induces a norm equivalent to $H^{1/2}(\p\GO)$-norm. Then, eigenvalues of $\Kcal_{\p \GO}^*$ on $\Hcal^*$ and $\Kcal_{\p \GO}$ on $\Hcal$ are identical, and functions $\Scal_{\p \GO}[\Gvf_j]$ on $\p\GO$ are normalized eigenfunctions of $\Kcal_{\p \GO}$ on $\Hcal$ ($\Gvf_j$ are normalized eigenfunctions of $\Kcal_{\p \GO}^*$ on $\Hcal^*$).

Let $u_j(x) = \Scal_{\p\GO}[\Gvf_j](x)$ for $x \in \Rbb^3$ and $j=0,1, \ldots$. It is proved in \cite[Theorem 3.1]{AK} that $\GG(x-z)$, the fundamental solution of Laplacian, admits the following expansion:
\beq\label{single layer sum of fundamental sol}
\GG(x-z)=-\sum_{j=1}^{\infty} u_j(z) u_j(x)+ u_0(z),\quad x \in \p \GO,\ z\in {\Rbb^d} \setminus \p\GO.
\eeq
Since $\{ u_j|_{\p\GO} \}$  is orthonormal in $\Hcal=\Hcal(\p\GO)$, we have
for fixed $z\in {\Rbb}^3\setminus \p\GO$,
\beq\label{sum is finite}
\sum_{j=1}^{\infty} |u_j(z)|^2 = \Big\Vert \sum_{j=1}^{\infty} u_j(z) u_j \Big \Vert^2_{\Hcal} \le \| \GG(z-\cdot) \|^2_{\Hcal}.  
\eeq
Since the $\Hcal$-norm is equivalent to the $H^{1/2}$-norm on $\p\GO$, we have for any compact set $K$ in $\Rbb^3 \setminus \p\GO$
\beq\label{est1}
\sum_{j=1}^{\infty} \int_K |u_j(z)|^2 dz \le \int_K \| \GG(z-\cdot) \|^2_{\Hcal} dz   \le C
\eeq
for some constant $C$. Thus, as $j \to \infty$,
\beq\label{est2}
\int_K |u_j(z)|^2 dz \to 0
\eeq
and
\beq\label{est3}
\int_K |u_j(z)|^2 dz =o(j^{-1}) \quad \mbox{almost surely}
\eeq
by Lemma \ref{almost surely}.

Fix a compact set $K$ in $\Rbb^3 \setminus \p\GO$ and let $K'=\ol{K_\Ge}= \{ x \in \Rbb^3 : \mbox{dist}(x, K) \le \Ge \}$. If $\Ge$ is small enough, then $K'$ is also compact in $\Rbb^3 \setminus \p\GO$. Since $u_j$ is harmonic in $\Rbb^3 \setminus \p\GO$, for any multi-index $\Gb$ there is a constant $C=C(\Gb, \Ge)$ such that if $x \in K$, then
$$
|\p^\Gb u_j(x)|^2 \le C \int_{B_\Ge(x)} |u_j(z)|^2 dz,
$$
where $B_\Ge(x)$ denotes the ball of radius $\Ge$ centered at $x$. Thus we have
\beq\label{est4}
\sup_{x \in K} |\p^\Gb u_j(x)|^2 \le C \int_{K'} |u_j(z)|^2 dz.
\eeq
Now, the fact that $\mbox{\rm supp} \{u_j\} \subset \p\GO$ (the second statement of \eqnref{supp}), \eqnref{decayrate1} and \eqnref{decayrate2} follow from \eqnref{est2}, \eqnref{est3} and \eqnref{est4}.

We now prove the first statment of \eqnref{supp}, namely, $\mbox{\rm supp} \{u_j\} \neq \emptyset$.
Since $\{ u_j \}$  is orthonormal in $\Hcal=\Hcal(\p\GO)$, there is a constant $C>0$ such that
$$
\Vert u_j \Vert_{H^{1/2}(\p\GO)} \ge C
$$
for all $j$. Thus there exists a point $x\in \p\GO$ such that for any ball $B_x$ centered at $x$
$$
\limsup_{j\rightarrow \infty}\Vert u_j \Vert_{H^{1/2}(B_x\cap \p\GO)} >0.
$$
It then follows from extension theorems of Sobolev spaces that
$$
\limsup_{j\rightarrow \infty}\Vert u_j \Vert_{H^1(B_x)} \ge C \limsup_{j\rightarrow \infty}\Vert u_j \Vert_{H^{1/2}(B_x \cap \p\GO)} >0.
$$
This means $x\in \mbox{\rm supp} \{u_j\} $ as desired. This completes the proof. \qed

\section{Strictly convex domains}\label{sec: Proof thm convexl}

In this section we prove Theorem \ref{thm:convex}. We then show as a consequence that CALR does not occur on the strictly convex domains in three dimensions.

\subsection{Proof of Theorem \ref{thm:convex}}

Since $\p\GO$ is $C^\infty$-smooth by assumption, the operator $\Kcal_{\p\GO}$ is known to be a strictly homogeneous pseudo-differential operator ($\Psi DO$) of order $-1$ (see \cite{Agrano-book}), and it is proved in \cite{Miyanishi:Weyl, MR} that its principal symbol is given by
\beq\label{NPsymbol}
\Gs_{prin}(\Kcal_{\p\GO}) \equiv \frac{L(x) \xi_2^2 -2M(x)\xi_1 \xi_2 +N(x)\xi_1^2}{4 \det (g_{ij})  \big( \sqrt{\sum_{j, k} g^{jk}(x)\xi_j \xi_k} \big)^{3}},
\eeq
where $g_{ij}$ is the metric tensor and $L(x), M(x), N(x)$ are the coefficients of the second fundamental form at the point $x$.
Since the operator $\Kcal^*_{\p\GO}-\Kcal_{\p\GO}$ is a $\Psi DO$ of order $-2$, $\Kcal^*_{\p\GO}$ and $\Kcal_{\p\GO}$ have the same principal symbols.
Thus the symbol $\Gs_{prin}(\Kcal_{\p\GO}^*)$ is positive definite if $\GO$ is strictly convex. In fact, it is a consequence of \eqnref{NPsymbol} and the relation
$$
\Gk=\frac{M^2-LM}{g_{11} g_{22} - g_{12}^2},
$$
where $\Gk$ is the Gaussian curvature. Thus all eigenvalues, except possibly finitely many, of $\Kcal_{\p\GO}^*$ are positive (see, e.g., \cite{Agrano-book, MR}). If $\Kcal_{\p\GO}^*$ has actually a finitely many non-positive eigenvalues, then we can modify it by adding a finite dimensional smoothing operator to have a positive definite $\Psi DO$ of order $-1$, which we denote by $\Kcal^*$. Thus for each real number $s$ there are constants $c_s$ and $C_s$ such that
\beq\label{Kcal}
c_s \Vert \Gvf \Vert_{H^{s-1/2}(\p\GO)} \leq \Vert \Kcal^* [\Gvf] \Vert_{H^{s+1/2}(\p\GO)} \leq C_s\Vert \Gvf \Vert_{H^{s-1/2}(\p\GO)}
\eeq
for all $\Gvf \in H^{s}(\p\GO)$. Note that there is $j_0$ such that
$$
\Kcal^* [\Gvf_j] = \Kcal_{\p\GO}^* [\Gvf_j] \quad \mbox{for all } j \ge j_0,
$$
and the corresponding eigenvalues $\Gl_j$ are all positive. We then infer from \eqnref{Kcal} that $\Gvf_j \in H^{s}(\p\GO)$ for all $s$ if $j \ge j_0$.

Let $K$ be a compact set in $\Rbb^3 \setminus \p\GO$. Since $\mbox{dist}(K, \p\GO) >0$, for any positive integer $k$ and for any real number $s$, there is a constant $M_{k,s}$ such that
$$
\Vert \Scal_{\p\GO}[\Gvf_j] \Vert_{C^k(K)} \leq M_{k,s} \Vert \Gvf_j \Vert_{H^{-s-1/2}(\p\GO)}.
$$
It then follows from \eqnref{Kcal} that
\beq\label{Gljs}
\Vert \Scal_{\p\GO}[\Gvf_j] \Vert_{C^k(K)} \leq C_{k,s} \Vert (\Kcal_{\p\GO}^*)^s [\Gvf_j] \Vert_{H^{-1/2}(\p\GO)} = C_{k,s} \Gl_j^{s}
\eeq
for some constant $C_{k,s}$ depending on $k$ and $s$, but independent of $j$.

It is proved in \cite{Miyanishi:Weyl} that $\{\Gl_j\}_{j\in \Nbb}$ asymptotically behaves like
\beq\label{weyl}
\Gl_j \sim C_{\p\GO} j^{-1/2}\quad \mbox{as } j\rightarrow \infty,
\eeq
in the sense that $\Gl_j j^{1/2} \to C_{\p\GO}$ as $j \to \infty$. Here the constant $C_{\p\GO}$ is given by
\beq\label{CpGO}
C_{\p\GO} = \Big( \frac{3W(\p\GO) - 2\pi \chi(\p\GO)}{128 \pi} \Big)^{1/2} ,
\eeq
where $W(\p\GO)$ and $\chi(\p\GO)$, respectively, denote the Willmore energy  and the Euler characteristic of the boundary surface $\p\GO$.
Thus we infer from \eqnref{Gljs} that for each integer $k$ and $s$ there is a constant $C_{k,s}$ such that
\beq
\Vert \Scal_{\p\GO}[\Gvf_j] \Vert_{C^k(K)} \leq C_{k,s} j^{-s/2} \quad\mbox{for all } j \ge j_0.
\eeq
So, we have \eqnref{decay-convex} and the proof is complete.
\qed

\subsection{Non-occurrence of CALR}\label{subsec:CALR}

Let $\GO$ be a bounded domain in $\Rbb^3$ made of a plasmonic material of a negative dielectric constant. The material property of the domain $\GO$ is represented by $\Ge_c+i\Gd$ where $\Ge_c<0$
indicates the negative dielectric constant and $\Gd>0$ does the dissipation. Let $\Ge_m>0$ be the dielectric constant of the
matrix $\Rbb^3\setminus \overline{\GO}$. Thus the distribution of the dielectric constant of the structure is given by
\beq
\Ge=
\begin{cases}
\Ge_c+i\Gd\quad &\mbox{in}\ \GO, \\
\Ge_{m}  \quad\quad\quad &\mbox{in}\ \Rbb^3\backslash \overline{\GO}.
\end{cases}
\eeq
We then consider the following problem:
\beq\label{harmonic eq}
\begin{cases}
\nabla\cdot \Ge \nabla u= a\cdot \nabla \Gd_z\quad \mbox{in}\ \Rbb^3, \\
u(x)=O(|x|^{-2}) \quad\ \ \mbox{as}\ |x|\rightarrow \infty,
\end{cases}
\eeq
where $a$ is a constant vector and $\Gd_z$ is the Dirac mass at $z\in \Rbb^3 \setminus \overline{\GO}$.

The CALR is a spectral phenomenon occurring at the accumulation point $0$ of the eigenvalues, for that we assume $\Ge_c+\Ge_m=0$. When the domain is an annulus in two dimensions, it is shown in \cite{MN_06} that there is a virtual radius such that the location $z$ of the dipole source appearing in \eqnref{harmonic eq} lies inside the virtual radius, then huge resonance occurs and the source becomes invisible. Here we only recall, referring to \cite{ACKLM-13, AK} for the definition of CALR,  that CALR is characterized by the quantity $E_\Gd(z):=\Gd \| \nabla u_\Gd \|_{L^2(\GO)}^2$ which represents the imaginary part of the energy, namely,
$$
E_\Gd(z) = \Im \int_{\Rbb^d} \Ge |\nabla u_\Gd|^2 dx.
$$
Physically it represents the electro-magnetic energy dissipated into heat. For CALR to occur, it is necessary for $E_\Gd(z)$ to tend to $\infty$ as $\Gd \to 0$ for some $z$. We prove the following theorem which implies that the CALR does not occur on strictly convex domains in $\Rbb^3$. CALR is known to occur on concentric disks, ellipses, and confocal ellipses, and not to occur on balls and concentric balls (see, for example, \cite{ACKLM-13, ACKLM14, AK, CKKL, KLSW, MN_06}). We also refer to a recent review of McPhadran and Milton \cite{MM} for CALR and related topics including some historic accounts.

\begin{theorem}
Let $\GO$ be a strictly convex bounded domain in $\Rbb^3$ with the $C^\infty$-smooth boundary and assume that $\ker (\Kcal_{\p\GO})=\emptyset$. The following holds for the solution $u_\Gd$ to \eqnref{harmonic eq}: For any $z$ there is $C=C_z$ independent of $\Gd$ such that
\beq\label{ALRbound}
\| \nabla u_\Gd \|_{L^2(\GO)} < C.
\eeq
In particular, $E_\Gd(z) \to 0$ as $\Gd \to 0$.
\end{theorem}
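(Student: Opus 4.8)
The plan is to reduce the resonance bound \eqnref{ALRbound} to a statement about how fast the spectral coefficients of the source term decay relative to the eigenvalues $\Gl_j$, and then invoke the super-polynomial decay of the plasmons established in Theorem \ref{thm:convex}. First I would recall the standard layer-potential representation of the solution $u_\Gd$ to the transmission problem \eqnref{harmonic eq}: writing $u_\Gd = \GG * (a \cdot \nabla \Gd_z) + \Scal_{\p\GO}[\Gvf_\Gd]$ for a density $\Gvf_\Gd \in \Hcal^*$, the transmission condition forces $\Gvf_\Gd$ to solve $(\Gl_\Gd I - \Kcal_{\p\GO}^*)[\Gvf_\Gd] = \p_\nu (\GG * (a \cdot \nabla \Gd_z))|_{\p\GO}$, where $\Gl_\Gd = \frac{\Ge_c + i\Gd + \Ge_m}{2(\Ge_c + i\Gd - \Ge_m)}$ tends to $0$ as $\Gd \to 0$ under the assumption $\Ge_c + \Ge_m = 0$; in fact $\Gl_\Gd = \frac{i\Gd}{2(2\Ge_c + i\Gd)} \approx \frac{i\Gd}{4\Ge_c}$, so $|\Gl_\Gd| \sim c\Gd$. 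Expanding the right-hand side $f := \p_\nu (\GG * (a\cdot\nabla\Gd_z))|_{\p\GO}$ in the orthonormal eigenbasis $\{\Gvf_j\}$ of $\Hcal^*$ as $f = \sum_j f_j \Gvf_j$, one gets $\Gvf_\Gd = \sum_j \frac{f_j}{\Gl_\Gd - \Gl_j}\Gvf_j$.

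Next I would express $\| \nabla u_\Gd \|_{L^2(\GO)}^2$ in terms of these coefficients. Using the single-layer potential's Dirichlet energy identity, $\| \nabla \Scal_{\p\GO}[\Gvf] \|_{L^2(\GO)}^2 = \la (\tfrac12 I - \Kcal_{\p\GO}^*)\Gvf, \Gvf \ra_{\Hcal^*}$ (the interior Plemelj/Green's identity), plus the fact that the free-space part $\GG * (a\cdot\nabla\Gd_z)$ is smooth near $\ol{\GO}$ and contributes a bounded term, the key quantity is $\sum_j \big(\tfrac12 - \Gl_j\big) \big| \tfrac{f_j}{\Gl_\Gd - \Gl_j} \big|^2$. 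Since $\tfrac12 - \Gl_j$ is bounded, it suffices to bound $\sum_j \frac{|f_j|^2}{|\Gl_\Gd - \Gl_j|^2}$. The crucial point is that the source $z$ lies strictly outside $\ol{\GO}$, so $f = \p_\nu \GG(\cdot - z)|_{\p\GO}$ (up to the constant vector $a$) is the boundary normal derivative of a function harmonic in a neighborhood of $\ol{\GO}$; by the argument in the proof of Theorem \ref{thm:general} (or directly), $f$ is smooth on $\p\GO$, hence $f \in H^s(\p\GO)$ for every $s$, and therefore — since $\Gl_j \sim C_{\p\GO} j^{-1/2}$ and the eigenfunction norms are controlled by powers of $\Gl_j$ via \eqnref{Kcal} — the coefficients satisfy $|f_j| = O(\Gl_j^{N})$ for every $N$, equivalently $|f_j| = O(j^{-N})$ for every $N$. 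Concretely, $f_j = \la f, \Gvf_j\ra_{\Hcal^*} = \la (\Kcal^*)^{-s} f, (\Kcal^*)^s \Gvf_j \ra_{\Hcal^*} = \Gl_j^s \la (\Kcal^*)^{-s} f, \Gvf_j\ra_{\Hcal^*}$ for $j \ge j_0$, and $(\Kcal^*)^{-s} f \in \Hcal^*$ by \eqnref{Kcal} since $f$ is smooth, giving $|f_j| \le C_s \Gl_j^s$ with $\sum_j |\la(\Kcal^*)^{-s}f,\Gvf_j\ra|^2 < \infty$.

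Now I would combine the ingredients. Split the sum $\sum_j \frac{|f_j|^2}{|\Gl_\Gd - \Gl_j|^2}$ at a threshold. For indices $j$ with $\Gl_j \ge 2|\Gl_\Gd|$ (i.e. $j$ small, up to roughly $\Gd^{-2}$), we have $|\Gl_\Gd - \Gl_j| \ge \tfrac12 \Gl_j \ge c j^{-1/2}$, so this part is bounded by $C \sum_j |f_j|^2 j \le C \sum_j j^{-N+1} < \infty$ uniformly in $\Gd$. For indices $j$ with $\Gl_j < 2|\Gl_\Gd|$ (i.e. $j \gtrsim \Gd^{-2}$), I bound $|\Gl_\Gd - \Gl_j| \ge |\Im \Gl_\Gd| \sim c\Gd$, so this part is at most $\frac{C}{\Gd^2}\sum_{j \gtrsim \Gd^{-2}} |f_j|^2 \le \frac{C}{\Gd^2}\sum_{j \gtrsim \Gd^{-2}} j^{-N}$; choosing $N$ large (say $N = 4$) makes this $\le \frac{C}{\Gd^2} (\Gd^{-2})^{-N+1} = C\Gd^{2N-4}$, which is bounded (indeed $\to 0$) as $\Gd \to 0$. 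This yields $\| \nabla u_\Gd\|_{L^2(\GO)} < C_z$, and then $E_\Gd(z) = \Gd \|\nabla u_\Gd\|_{L^2(\GO)}^2 \le C_z \Gd \to 0$.

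The main obstacle I anticipate is handling the transition region between the two regimes carefully — near $j \sim \Gd^{-2}$ the denominators $|\Gl_\Gd - \Gl_j|$ can be small (of order $\Gd$) for several indices simultaneously, so the clean dichotomy above needs the super-polynomial decay of $|f_j|$ to absorb the factor $\Gd^{-2}$ with room to spare; getting the bookkeeping of the Weyl asymptotics \eqnref{weyl} right (counting how many $j$ fall in each dyadic shell of $\Gl_j$-values) is the delicate part, but it is precisely here that strict convexity enters through Theorem \ref{thm:convex}. A secondary technical point is justifying the energy identity $\|\nabla\Scal_{\p\GO}[\Gvf]\|_{L^2(\GO)}^2 = \la(\tfrac12 I - \Kcal_{\p\GO}^*)\Gvf,\Gvf\ra_{\Hcal^*}$ and the decomposition of $u_\Gd$ into a smooth free part plus a single-layer potential with the stated density equation; these are standard for $C^{1,\Ga}$ (hence $C^\infty$) boundaries but should be cited from \cite{AK, ACKLM-13}. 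The hypothesis $\ker(\Kcal_{\p\GO}) = \emptyset$ is used to ensure $\Gl_j \ne 0$ for all $j$, so that the resolvent $(\Gl_\Gd - \Gl_j)^{-1}$ is never singular at $\Gd = 0$ from a genuine zero eigenvalue; combined with the fact that on a strictly convex domain only finitely many $\Gl_j$ are non-positive and all nonzero, this guarantees a uniform lower bound $|\Gl_\Gd - \Gl_j| \ge c > 0$ for the finitely many negative $\Gl_j$ as well.
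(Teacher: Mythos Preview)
Your approach is correct and is essentially the same as the paper's: both rely on the spectral expansion of $\|\nabla u_\Gd\|_{L^2(\GO)}^2$ (which the paper cites directly from \cite{AK} as $\sum_j |a\cdot\nabla\Scal_{\p\GO}[\Gvf_j](z)|^2/(\Gd^2+\Gl_j^2)$, whereas you rederive it from the layer-potential ansatz) together with the super-polynomial decay of the numerators coming from Theorem~\ref{thm:convex} and the Weyl asymptotics \eqnref{weyl}. The paper's estimate is shorter because no regime-splitting is needed: since $\Gd^2+\Gl_j^2 \ge \Gl_j^2 \sim c\,j^{-1}$, one gets $\sum_j j^{-2N}/(\Gd^2+j^{-1}) \le C\sum_j j^{1-2N} < \infty$ uniformly in $\Gd$.
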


\begin{proof}
It is proved in \cite{AK} that
\beq\label{fundamental relation of resonance}
\Vert \nabla u_\Gd \Vert_{L^2(\GO)}^2 \sim \sum_{j=1}^{\infty} \frac{|a\cdot \nabla \Scal_{\p\GO}[\Gvf_j](z)|^2}{\Gd^2+\Gl_j^2}.
\eeq
According to Theorem \ref{thm:convex}, for any positive integer $N$ there is a constant $C_N$ such that
\beq\label{100}
|a\cdot \nabla \Scal_{\p\GO}[\Gvf_j](z)| \le C_N j^{-N}
\eeq
for all $j$ large enough. Since $W(\p\GO) \ge 4\pi$ (see, e.g., \cite{MN}) and $\chi(\p\GO)=2$ if $\p\GO$ is strictly convex, $C_{\p\GO} >0$. Thus \eqnref{100} together with \eqnref{weyl} leads us to
$$
\Vert \nabla u_\Gd \Vert_{L^2(\GO)}^2 \le C \sum_{j=1}^{\infty} \frac{j^{-2N}}{\Gd^2+j^{-1}},
$$
from which \eqnref{ALRbound} immediately follows.
\end{proof}

\section{Non-convex domains-Numerical experiments}\label{sec: Numerical non-convex}

The purpose of this section is to present results of numerical experiments for the surface localization of the plasmon on non-convex domains. Our interest lies in the question if the plasmon $\Scal_{\p\GO}[\Gvf_j](z)$ decays like $o(j^{-\infty})$ on non-convex surfaces like the strictly convex case. Throughout this section we use $u_j$ for $\Scal_{\p\GO}[\Gvf_j]$ for simplicity of notation.

We consider the Clifford torus as an example of non-convex surfaces:
$$
\p\GO:=T^2_{\text{Clifford}}:=\{ ((\sqrt{2}+\cos u)\cos v, (\sqrt{2}+\cos u)\sin v, \sin u) \in {\Rbb}^3\ |\ u, v\in [0, 2\pi)\ \}.
$$
We compute, for $j=1, 2, \ldots, 450$,
$$
\Vert u_j \Vert_{L^{2}(X)} = \left( \int_{X} |u_j(z)|^2 dz \right)^{1/2},
$$
where the region of integration $X$ is given by
\beq\label{setX}
X:= \{(x, 0, z) \in  \Rbb^3 \ |\ 0< x < 2\sqrt{2}, \ 0< z < 2\sqrt{2}, \ \mbox{dist} ((x, 0, z), \GO) >\Ge \}.
\eeq
Here $\Ge$ is the the side length of the typical mesh used for computation: $\Ge$ is approximately $1/\sqrt{2,000,000}$ as will be explained later. We emphasize that since $\Ge$ is quite small, the region $X$ of integration almost touching $\p\GO$ and this prevent $\Vert u_j \Vert_{L^{2}(X)}$ from being too small too quickly as $j$ is increasing. The result of computations is presented in Figure \ref{fig:log_energy}: it shows the graph of $5 \log \Vert u_j \Vert_{L^{2}(X)} + 35$. Here, the numbers $5$ and $35$ are chosen for nothing other than clarity of the presentation.

For the purpose of comparison, we present computational results of $\Vert u_j \Vert_{L^{2}(Y)}$ when the given domain is the oblate spheroid:
$$
\frac{x^2}{2}+\frac{y^2}{2}+z^2=1.
$$
Here the region of integration $Y$ is given by
\beq\label{setY}
Y := \{(x, 0, z) \in  \Rbb^3 \ |\  0 <x <3\sqrt{2}/2 , \ 0<z<2, \ \mbox{dist} ((x, 0, z), E)>\Ge \}
\eeq
with $E$ being the ellipsoid. It is known that the NP eigenvalues on oblate spheroids are positive except finitely many negative eigenvalues \cite{MR} (it is helpful to mention that some oblate spheroids actually have negative NP eigenvalues \cite{Ahner}). 
Figure \ref{fig:ellipsoid} present the graph of $\log \Vert u_j \Vert_{L^{2}(Y)}$.

\begin{figure}[t]
\centering
\includegraphics[width=.5\textwidth]{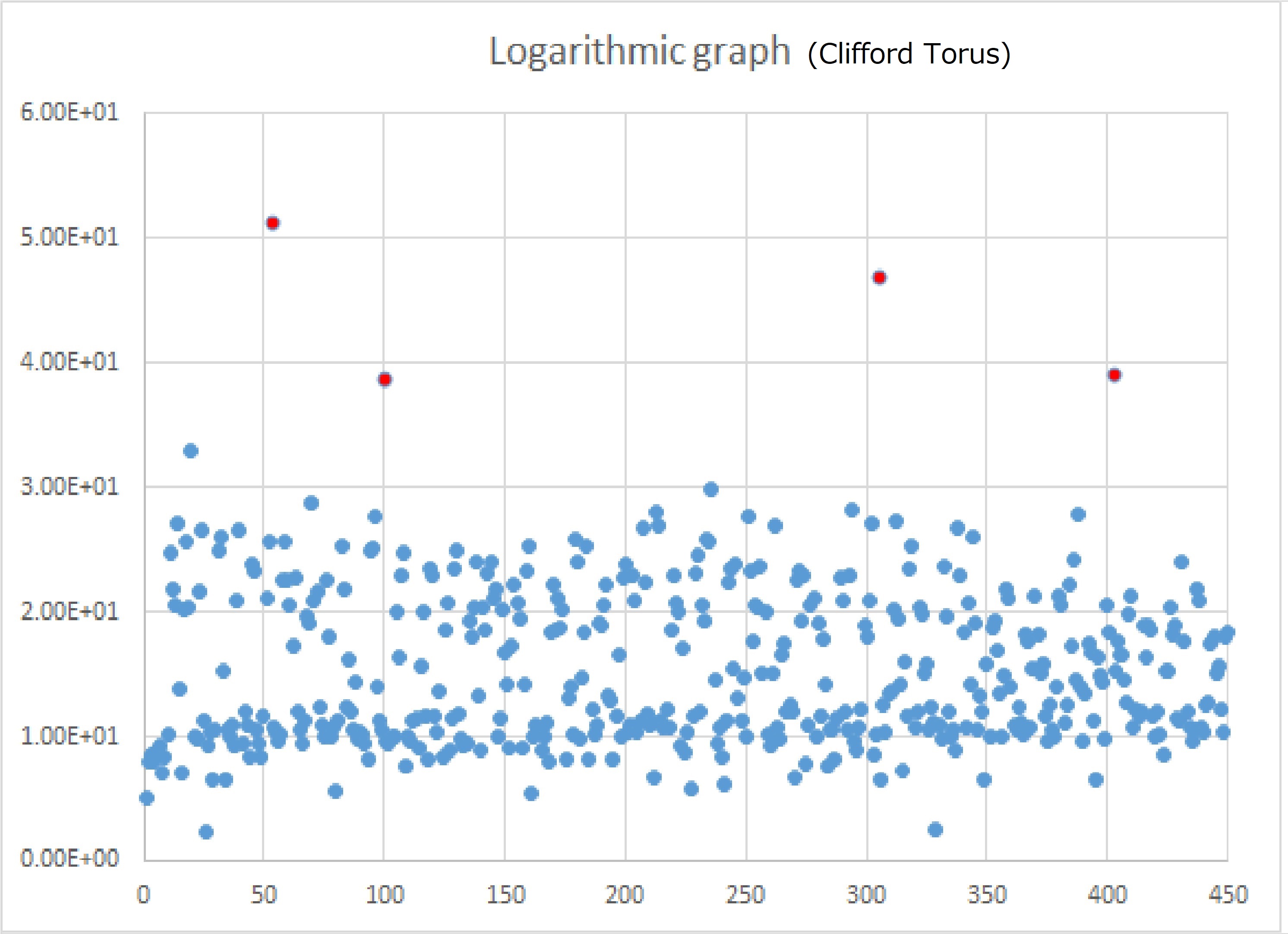}
\caption{The graph of $5 \log \Vert u_j \Vert_{L^{2}(X)} + 35$ on the Clifford torus. The horizontal axis represents positive eigenvalues of the NP operator enumerated in decreasing order up to 450. The red dots indicate values drastically larger than neighboring points. They occur at 53rd, 100th, 305th and 402nd eigenvalues. }
\label{fig:log_energy}
\end{figure}

\begin{figure}[htp]
\centering
\includegraphics[width=.5\textwidth]{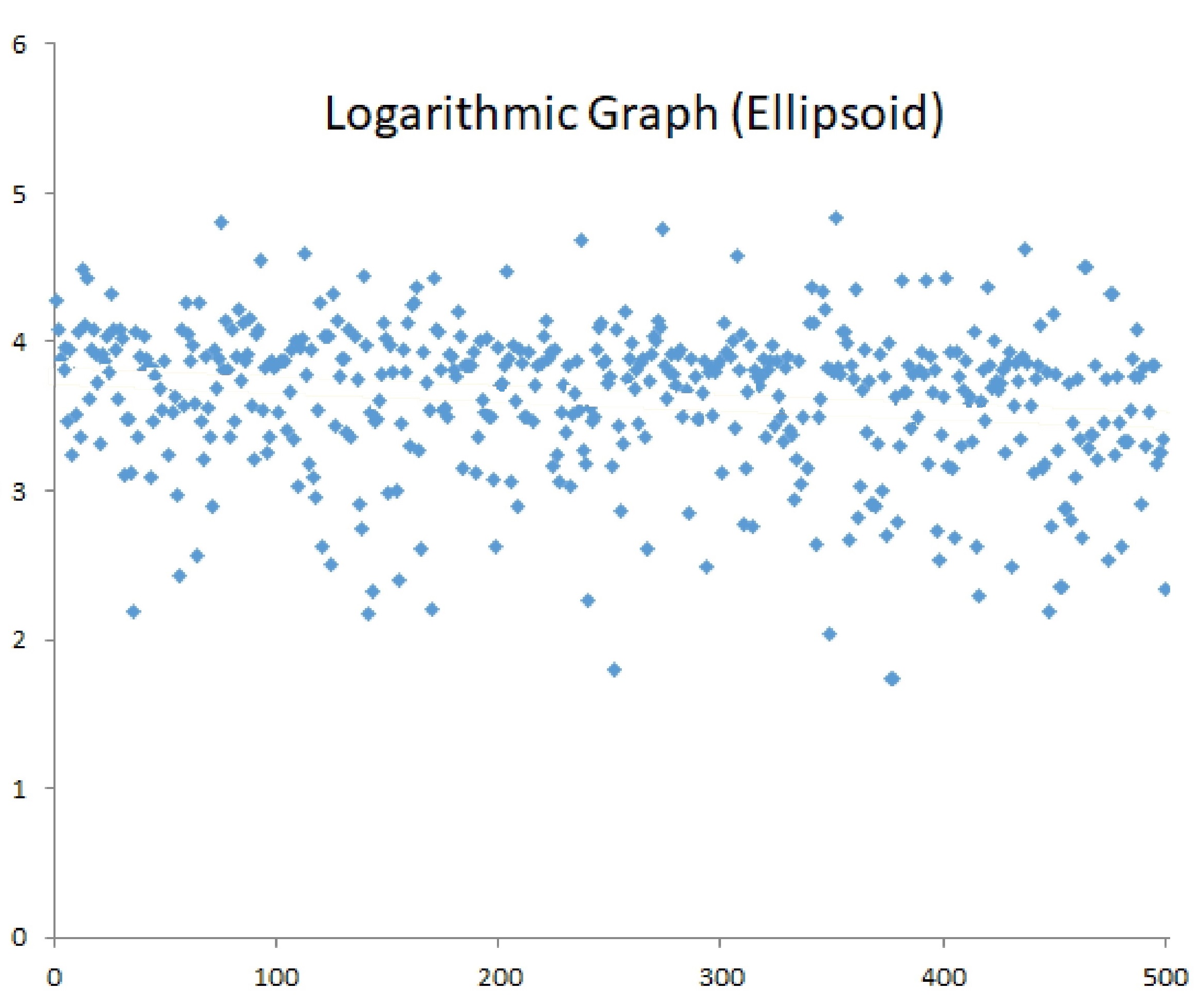}
\caption{The graph of $\log \Vert u_j \Vert_{L^{2}(Y)}$ on the ellipsoid. The horizontal axis represents positive eigenvalues of the NP operator enumerated in decreasing order up to 500. There is no exceptionally large value.}
\label{fig:ellipsoid}
\end{figure}

Figure \ref{fig:log_energy} shows that in the case of the torus there are four values which are drastically larger than the other ones, and they occur at 53rd, 100th, 305th and 402nd eigenvalues. The computation of NP eigenvalues on the surface in three dimensions is quite difficult, and the numerical errors prevent computations to be carried out beyond 450th eigenvalues. However, the computational result strongly suggests that in the case of the Clifford torus there might be a subsequence of plasmons which does not decay rapidly. On the other hand, according to Figure \ref{fig:ellipsoid}, no exceptionally large values appear in the case of the ellipsoid which is in accordance with the theoretical result (Theorem \ref{thm:convex}).

We look into the exceptional values further by investigating corresponding eigenfunctions. Figure \ref{fig:eigenfunction1} shows the eigenfunction corresponding 53rd eigenvalue (exceptional one) compared with that corresponding to the 52nd eigenvalue. The figure exhibits an intriguing feature of eigenfunctions corresponding to exceptional eigenvalues: The 53rd eigenfunction is invariant under the rotation with respect to the $z-$axis, but $52$nd one is not. Figure \ref{fig:eigenfunction2} shows that eigenfunctions corresponding to the other three exceptional eigenvalues have the same property. It also shows that the other (401st and 403rd) eigenfunctions are not invariant under the rotation with respect to the $z-$axis.

Such exceptional values in the torus also occur among negative eigenvalues. For example, the $-39$th eigenvalue is an exceptional one. Figure \ref{fig:eigenfunction3} shows the corresponding eigenfunction compared with the $-38$th: the $-39$th eigenfunction is invariant under the rotation with respect to the $z-$axis, but the $-38$th eigenfunction is not.

\begin{figure}[htp]
\centering
\includegraphics[width=.39\textwidth]{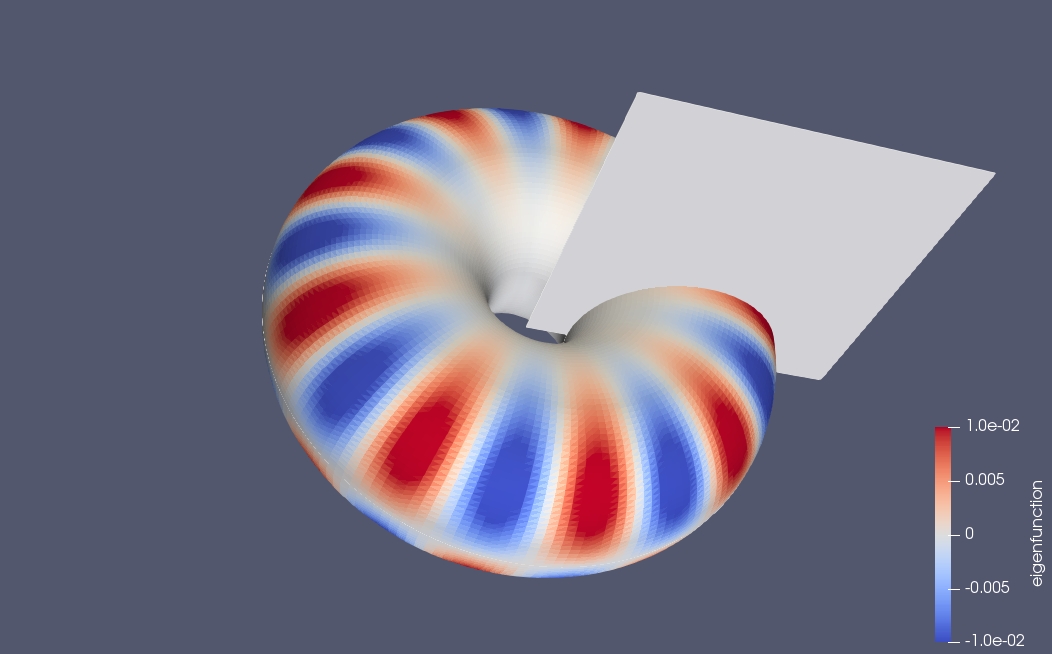}\hspace{.01\textwidth}
\includegraphics[width=.39\textwidth]{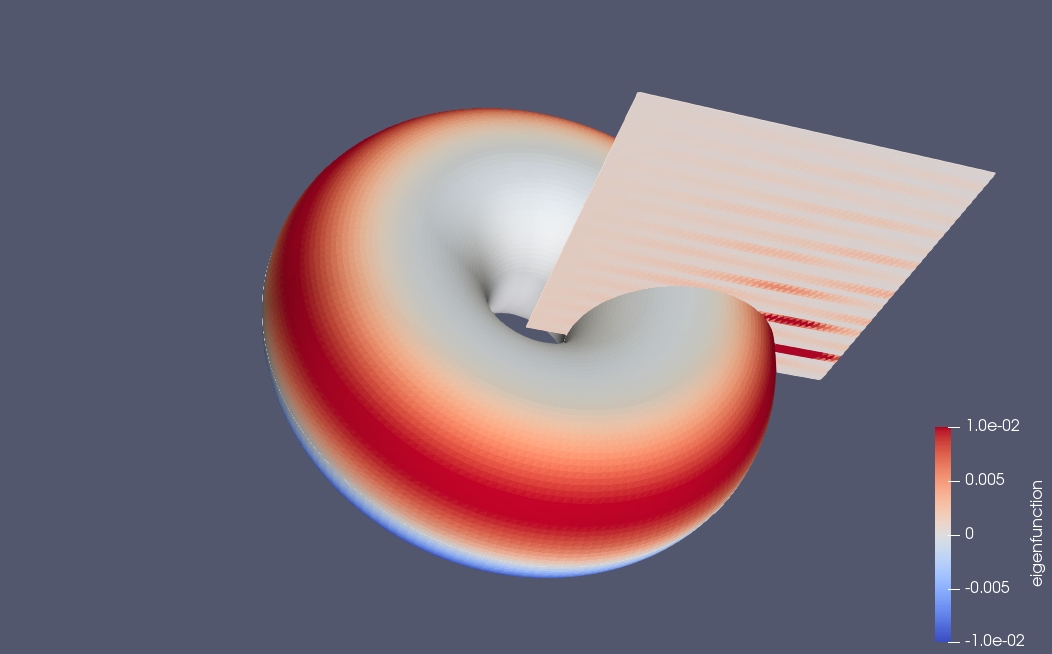}
\caption{The $52$nd eigenfunction (left) and the $53$th eigenfunction (right) on the Clifford torus. The $53$th eigenfunction is invariant under the rotation with respect to the $z-$axis, but $52$nd one is not. The $53$th eigenvalue is an exceptional one. The rectangular cross section represents the region of integration $X$ in \eqnref{setX}, and the color on it represents the value of $u_j(z)= \Scal_{\p\GO}[\Gvf_j](z)$ ($z \in X$).}
\label{fig:eigenfunction1}
\end{figure}

\begin{figure}[htp]
\centering
\includegraphics[width=.30\textwidth]{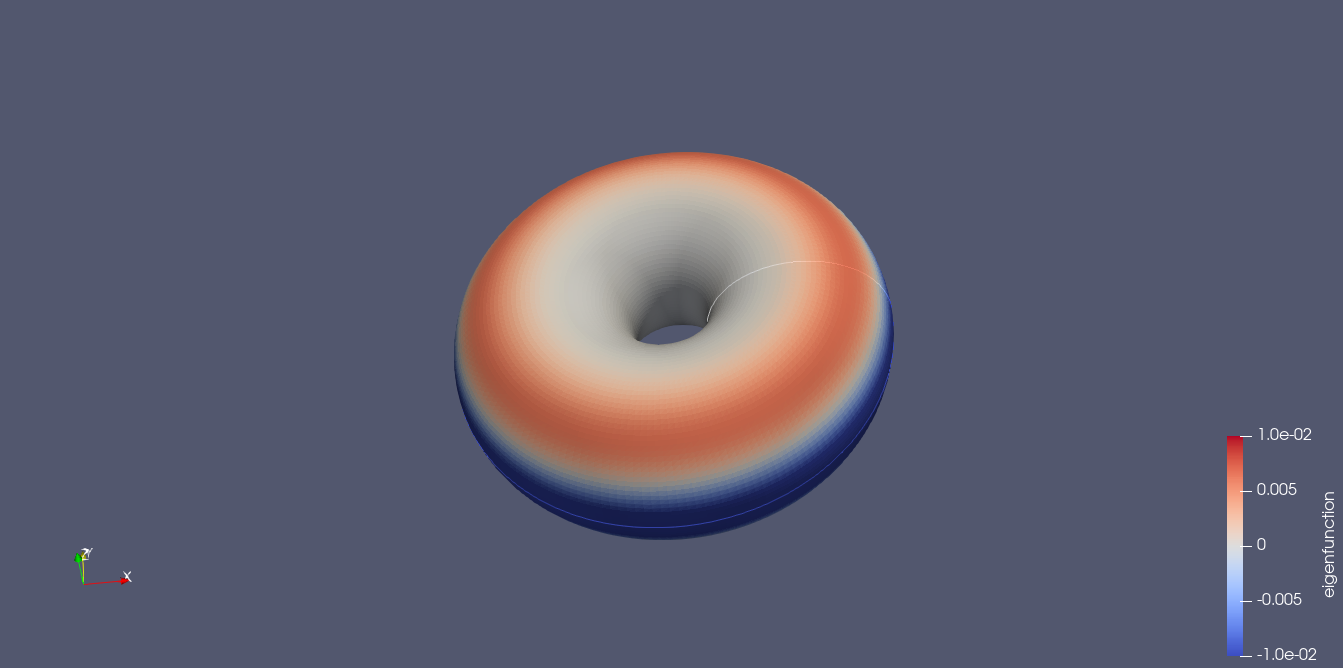}\hspace{.01\textwidth}
\includegraphics[width=.30\textwidth]{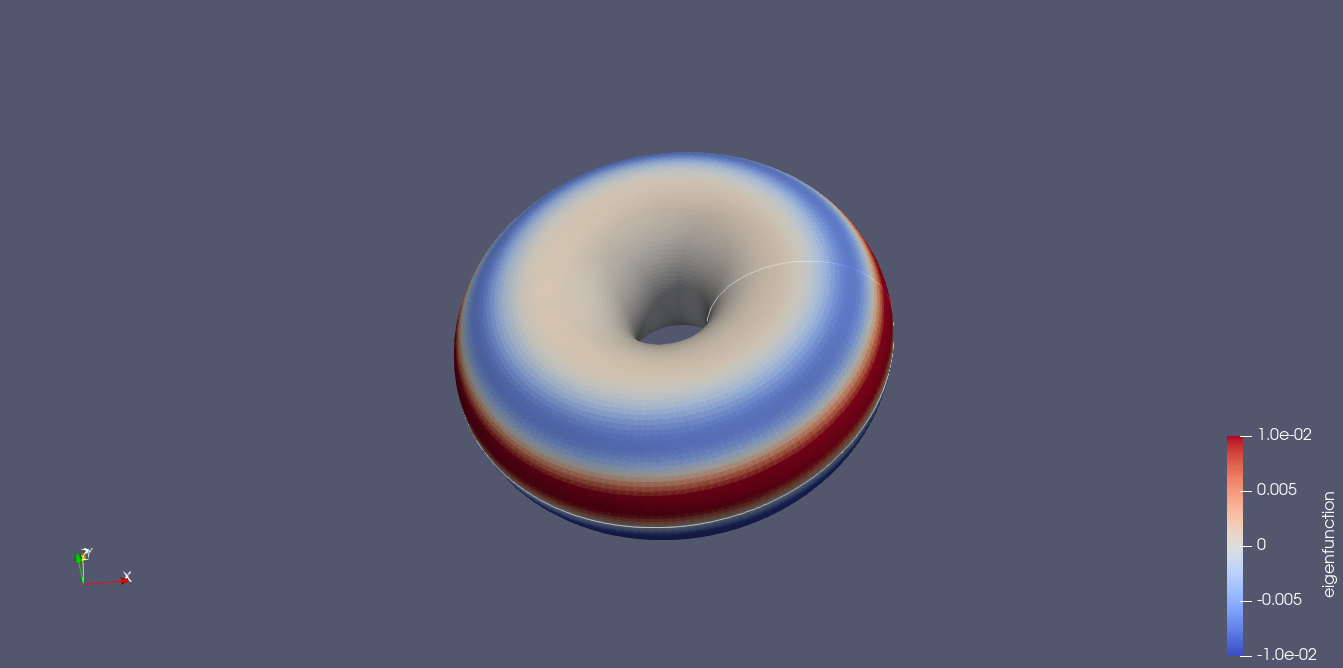}\hspace{.01\textwidth}
\includegraphics[width=.30\textwidth]{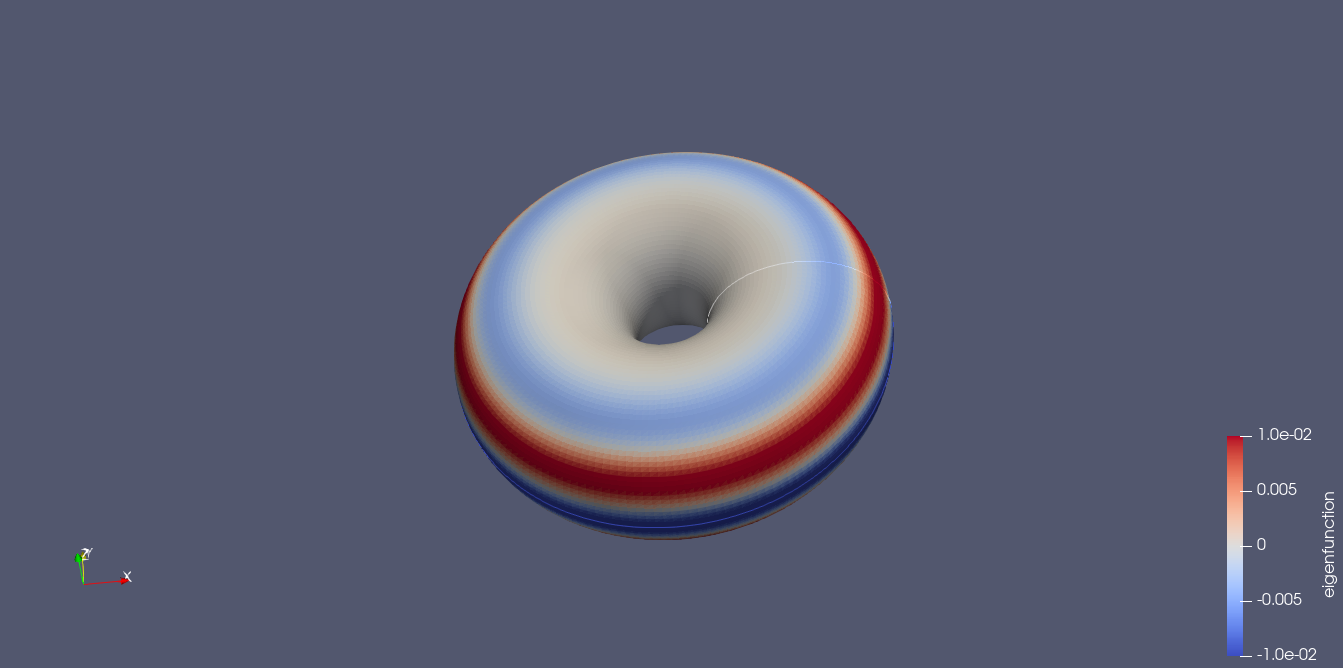} \\ \vspace{.01\textwidth}
\includegraphics[width=.30\textwidth]{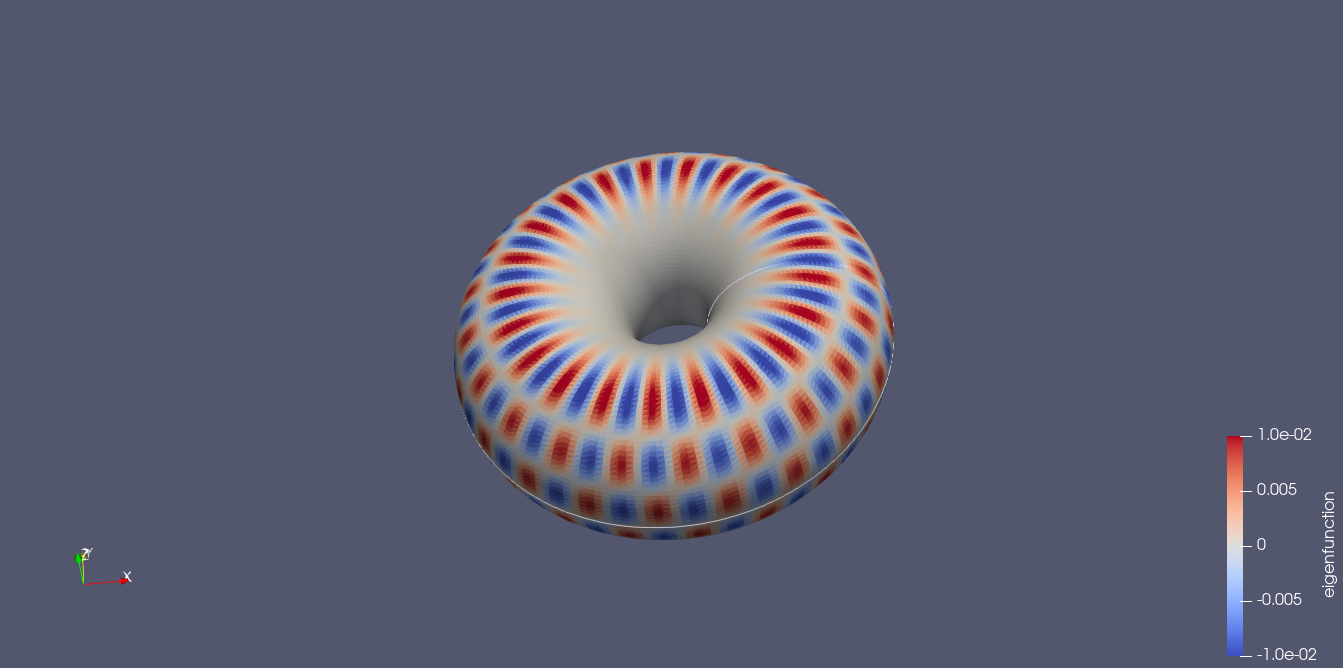}\hspace{.01\textwidth}
\includegraphics[width=.30\textwidth]{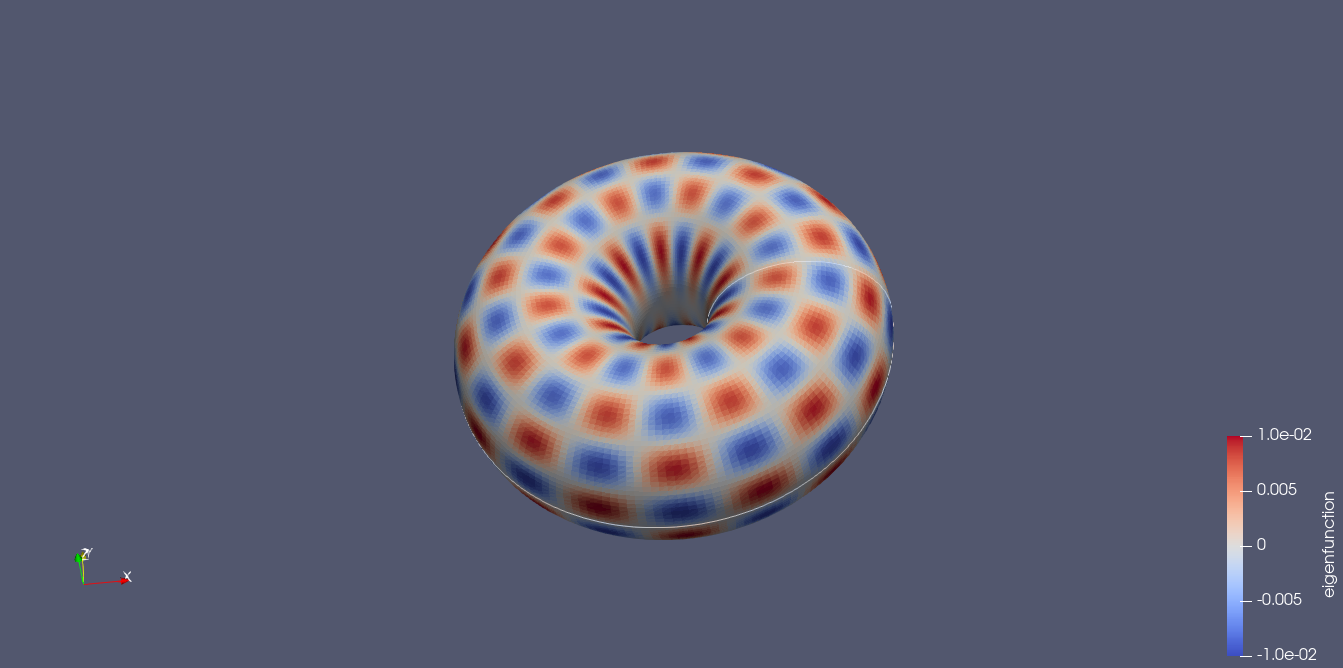}
\caption{Top: The eigenfunctions corresponding to exceptional $100$th, $305$th and $402$nd eigenvalues on the Clifford torus. They are all invariant under the rotation with respect to the $z-$axis. Bottom: The $401$st and $403$rd eigenfunctions. They are not invariant under the rotation with respect to the $z-$axis.}
\label{fig:eigenfunction2}
\end{figure}

\begin{figure}[htp]
\centering
\includegraphics[width=.39\textwidth]{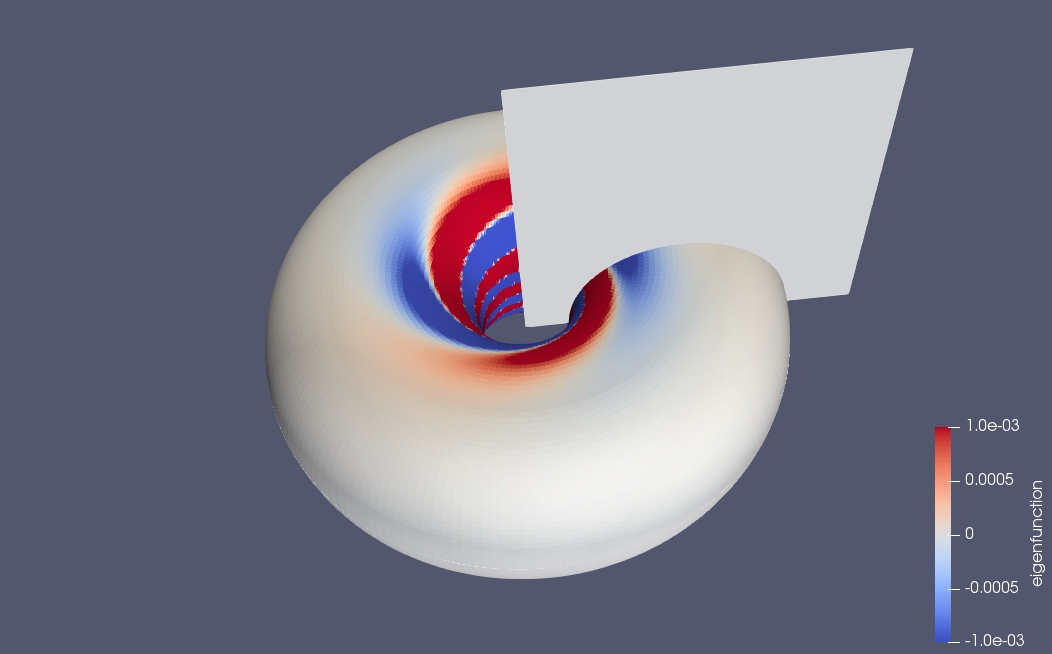}\hspace{.01\textwidth}
\includegraphics[width=.39\textwidth]{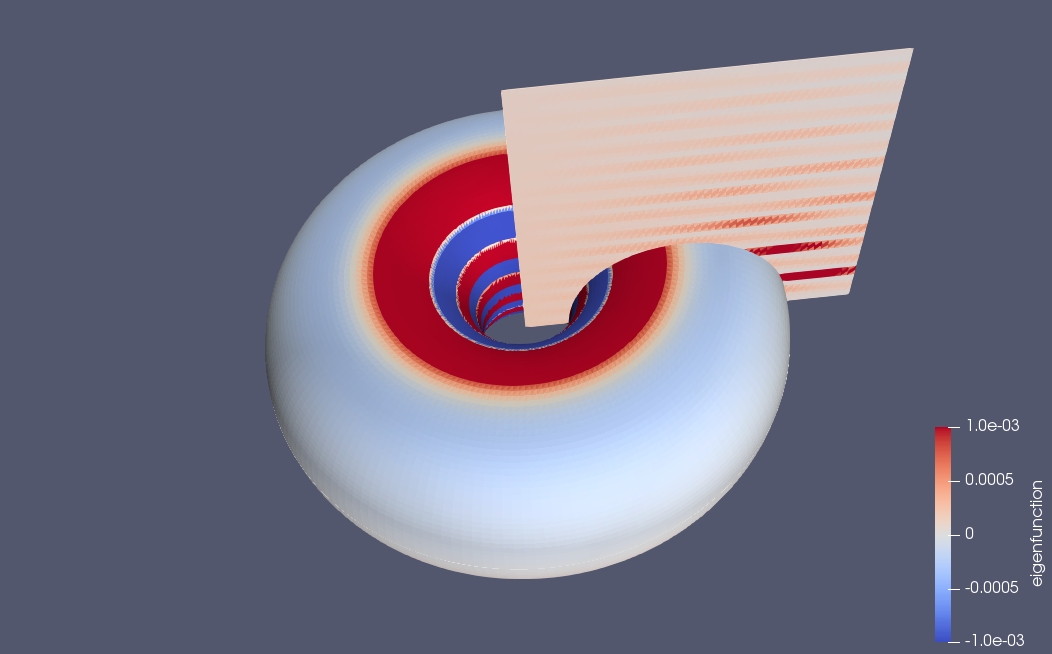}
\caption{The $-38$th eigenfunction (left) and the $-39$th eigenfunction (right) on the Clifford torus. The $-39$th eigenvalue is an exceptional one.}
\label{fig:eigenfunction3}
\end{figure}

It is insightful to compare eigenfunctions on the torus and those on the ellipsoid. Figure \ref{fig:eigenfunction_ell} shows the 56th and the 123rd eigenfunctions. They seem to be supported more locally than eigenfunctions on the torus without exhibiting symmetries.

\begin{figure}[htp]
\centering
\includegraphics[width=.39\textwidth]{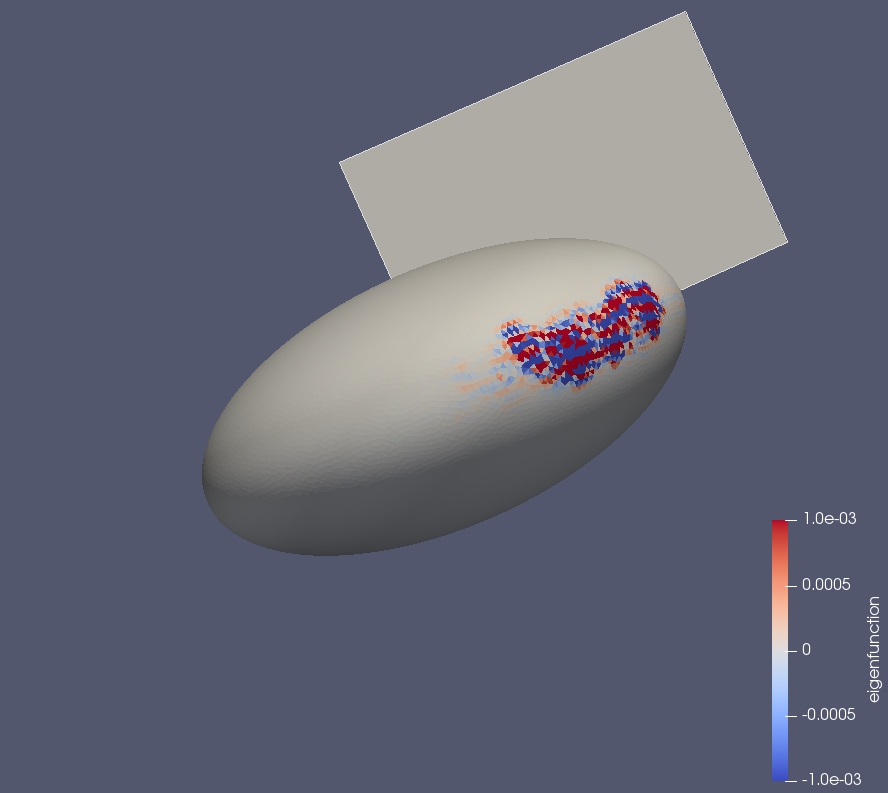}\hspace{.01\textwidth}
\includegraphics[width=.39\textwidth]{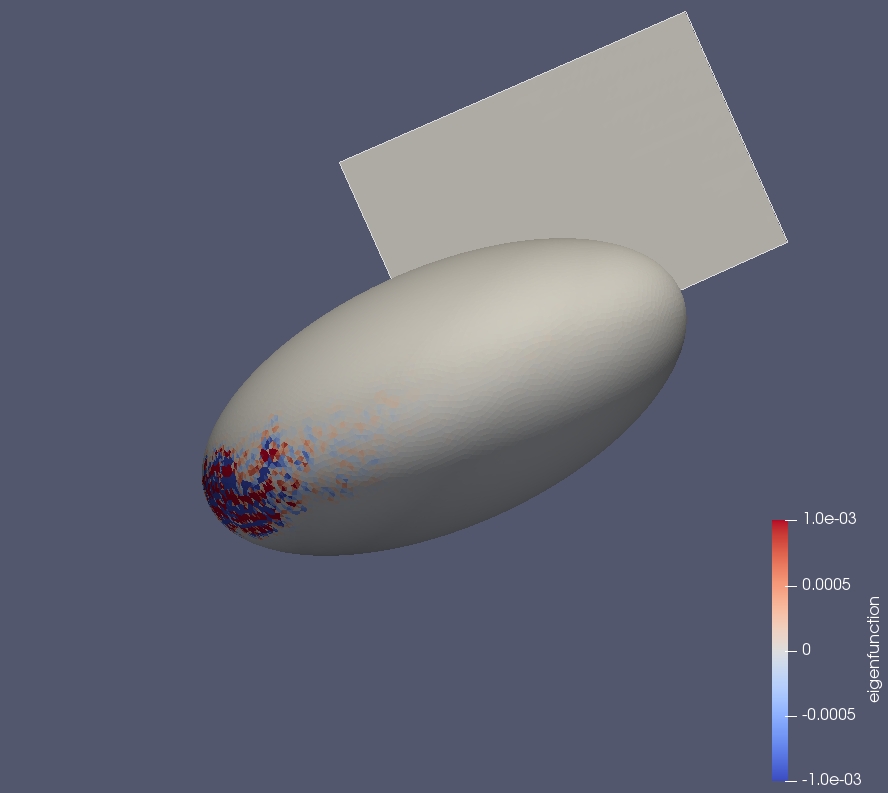}
\caption{The 56th eigenfunction (left) and the 123rd one (right) on the ellipsoid. }
\label{fig:eigenfunction_ell}
\end{figure}

Some words on numerical computation are in order. We use about $2 \times 10^6$ meshes on the surfaces (the torus and the ellipsoid) and regions of integration ($X$ and $Y$). Meshes are generated automatically using AMR (Adaptive Mesh Refinement). The number  $\Ge$ appearing in the sets $X$ and $Y$ in \eqnref{setX} and \eqnref{setY} is approximately the side length of the meshes (this is why it is about $1/\sqrt{2,000,000}$). We employ the Gaussian quadrature rule for approximations of the definite integrals in the NP operator and the single layer potential. The NP eigenvalues and eigenfunctions are obtained by numerically computing the eigenvalues and eigenfunctions of the matrix obtained by discretizing the NP operator. These numerical calculations are performed using Freefem++4.4.2 \cite{Hecht} and LAPACK, version 3.8.0 \cite{Lapack}. The computed eigenvalues have small imaginary parts which appear because of the discretization error. We take only the real parts as eigenvalues.

\section*{Discussion}

We studied in a quantitative manner the decay of the plasmon $\Scal_{\p\GO}[\Gvf_j](z)$ in three dimensions. We showed that it decays at the rate of $j^{-1/2}$ almost surely for smooth surface $\p\GO$ of arbitrary shape. The convergence rate becomes $j^{-\infty}$ if $\GO$ is strictly convex. We showed as a consequence that CALR does not occur on strictly convex surfaces. We also found out through numerical computations that there are four exceptional eigenvalues among the first 450 positive eigenfunctions on the Clifford torus for which the plasmon have large values. The common feature of eigenfunctions corresponding to these exceptional eigenvalues is that they are invariant under the rotation with respect to the axis of symmetry. It would be quite important and challenging to investigate rigorously these computational findings on the torus. It may be related to possibility of occurrence of CALR on tori.


\end{document}